\theoremstyle{plain} % default
\newtheorem{theorem}[subsection]{Theorem}
\newtheorem{proposition}[subsection]{Proposition}
\newtheorem{lemma}[subsection]{Lemma}
\theoremstyle{definition}
\newtheorem{example}[subsection]{Example}
\theoremstyle{remark}
\newtheorem{remark}[subsection]{Remark}
\newtheorem*{remark*}{Remark}
\numberwithin{equation}{subsection}
\begin{document}

\title[Biclosed sets, quasitrivial semigroups and oriented matroid]{Biclosed sets, quasitrivial semigroups and oriented matroid}

\author{Weijia Wang}
\address{School of Statistics and Mathematics \\Shanghai Lixin University of Accounting And Finance\\ No. 995 Shangchuan Road, Shanghai, 201209\\ China }
\email{wwang8@alumni.nd.edu}
\author{Rui Wang}
\address{School of Statistics and Mathematics \\Shanghai Lixin University of Accounting And Finance\\ No. 995 Shangchuan Road, Shanghai, 201209\\ China }
\email{18709483691@163.com}
%\dedicatory{}
%\date{\today}
%\thanks{}
%\keywords{}
%\subjclass{}
\maketitle

\begin{abstract}
In this paper, we establish a one-to-one correspondence between the set of biclosed sets in an irreducible root system of type $A_n$ and the set of quasitrivial semigroup structures on a set with $n+1$ elements. Building on this correspondence, 
we first generalize this bijection to provide a semigroup structural characterization of the biclosed sets in a standard parabolic subset. In particular, this allows us to derive an enumeration result for the elements in a parabolic weak order of type $A$.
Secondly, we define an index for an arbitrary subset of the root system of type $A_n$, which quantifies their deviation from from being biclosed, and prove that such an index coincides with the associativity index of the associated quasitrivial magma.
Thirdly, we define type $B_n$  quasitrivial semigroups, and prove that they are in bijective with   biclosed sets in a type $B_n$ root system. Finally, by identifying certain biclosed sets with total preorders, we present  a purely combinatorial proof that a root system of type $A$  possesses an oriented matroid structure.
\end{abstract}

\section{Introduction}

In this paper we identify two algebraic-combinatorics objects.

Root systems play an important role in the study of Lie groups, Lie algebras, algebraic groups, Coxeter groups and representation theory. Biclosed sets in a root system have been extensively studied due to their relationship with weak Bruhat order, reflection orders and Hecke algebra, etc.  For a finite Coxeter group, the biclosed sets in a positive system are in bijection with the group elements, and the poset of these biclosed sets, ordered by containment, is isomorphic to the weak Bruhat order of that Coxeter group. In this work, we focus on the biclosed sets in the whole finite root system. These sets are called invertible sets in \cite{biclosedphi}. The understanding of the  biclosed sets in a finite crystallographic root  system is crucial for studying the reflection orders on affine Weyl groups, $\mathfrak{h}$-complementable Lie subalgebras of simple Lie algebras (see \cite{DyerReflOrder}) and complex homogeneous spaces (see \cite{Malychev}).

A semigroup is defined as a set with an associative binary operation. It is a fundamental algebraic structure with broad applications in various fields, particularly in theoretic computer science. A quasitrivial semigroup is a semigroup whose binary operation outputs one of its input. This class of semigroups has generated much attention and has been  well characterized and enumerated in the works of \cite{quusicsemi1} and \cite{quusicsemi2}.

In Section \ref{mainbijA}, we establish a bijection between the set of biclosed sets in the root system of type $A_n$ (which is precisely the root system of the symmetric group $S_{n+1}$ as well as the Lie algebra $\mathfrak{sl}(n+1,\mathbb{C})$) and the set of quasitrivial semigroup structures on a set with $n+1$ elements.  Both sets admit natural $S_{n+1}$-actions and we show that our bijection is $S_{n+1}$-equivariant. Furthermore, we  determine the correspondence between subclasses of biclosed sets and quasitrivial semigroup structures. This bijection enables us to  characterize the associativity of a quasitrivial binary operation in terms of the biclosedness of a suitably constructed set in the root system. Additionally, we transfer the inclusion order on the set of biclosed sets to the set of quasitrivial semigroup structures. We give a characterization of this order without referring to the biclosed sets. Our results can also be interpreted as a categorification of the biclosed sets.

Building on such an identification, we generalize the bijection in several directions.

In Section \ref{parabolicsec}, we characterize the biclosed sets in a (standard) parabolic subset and show that they are in bijection with certain biclosed sets in the whole root system. This enables us to describe them by quasitrivial semigroup structures. Furthermore, we derive a formula for counting the biclosed sets in a (standard) parabolic subset in the root system of type $A$. An understanding of  the biclosed sets in a parabolic set is a fundamental first step toward understanding the parabolic weak order and addressing a related conjecture proposed in \cite{DyerWeakOrder}. 

In Section \ref{indexsec}, we investigate the associativity index of a quasitrivial magma. Such magmas correspond to  subsets of the root system of type $A$. We introduce the concept of the index of a subset of the root system,  which quantifies the deviation of the subset from being biclosed. We prove that the associativity index of a quasitrivial magma coincides with the index of its corresponding subset of the root system.

In Section \ref{bnsec}, we define   quasitrivial semigroups of type $B_n$ which are shown to be in bijection with the biclosed sets in the  root system of type $B_n$. 

In Section \ref{omsec}, we utilize the bijection between biclosed sets and quasitrivial semigroup structures to describe the covectors of a root system of type $A$. This approach enables us to obtain a purely combinatorial proof of the fact that the root system of type $A$ admits a natural oriented matroid structure without using the root system's linear realization.

\section{Background}

\subsection{Quasitrivial semigroup} A \emph{magma} is defined as a set \( X \) endowed with a binary operation \( F: X \times X \rightarrow X \). A \emph{semigroup} is a magma in which the binary operation \( F \) satisfies the associative property, that is,

\[
F(F(a, b), c) = F(a, F(b, c)) \quad \text{for all} \quad a, b, c \in X.
\]

A semigroup \((X, F)\) is termed \emph{commutative} if the operation \( F \) satisfies

\[
F(a, b) = F(b, a) \quad \text{for all} \quad a, b \in X.
\]

A semigroup \((X, F)\) is said to be \emph{anticommutative} if the equality \( F(a, b) = F(b, a) \) implies that \( a = b \).

A semigroup (or magma) \((X, F)\) is called \emph{quasitrivial} if for all \( a, b \in X \), the operation \( F \) yields

\[
F(a, b) \in \{a, b\}.
\]

An \emph{identity element} of a semigroup \((X, F)\), denoted by \( 1 \), is an element that satisfies

\[
F(1, x) = F(x, 1) = x \quad \text{for all} \quad x \in X.
\]

Similarly, a \emph{zero element} of a semigroup \((X, F)\), denoted by \( 0 \), is an element that fulfills

\[
F(0, x) = F(x, 0) = 0 \quad \text{for all} \quad x \in X.
\]

The \emph{projection map} \( \pi_{i}: X \times X \rightarrow X \), for \( i = 1, 2 \), is defined by

\[
\pi_i(a_1, a_2) = a_i.
\]

It is straightforward to verify that \((X, \pi_i)\) constitutes a quasitrivial semigroup.

A \emph{total preorder} $\precsim$ on a set $X$ is defined to be a binary relation on $X$ that satisfies the total and transitive properties. If $x\precsim y$ and $y\precsim x$, we write $x\sim y$ and this is an equivalence relation. Equivalently a total preorder $\precsim$ on $X$ can be viewed as inducing a total order on the set of equivalence classes of $\sim$. From this point onward, we will always adopt this perspective. The notation $x\precsim y$ signifies  that either $x\precnsim y$ or $x\sim y$. In many cases, $x\precnsim y$  is simply abbreviated as  $x\prec y$. 

It is known that a quasitrivial semigroup structure on a set $X$ is equivalent to a total preorder $\precsim$ on $X$ together with $F:X\times X\rightarrow X$ such that
\begin{equation}
  F =
    \begin{cases}
      \max_{\precsim} & \text{if $x\nsim y$}\\
      \pi_1 \, \text{or} \, \pi_2 & \text{when $F$ is restricted on an equivalence class.}
    \end{cases}
\end{equation}
A proof of this result can be found in \cite{quusicsemi1}.

Suppose that $X=X_n:=\{1,2,\cdots,n\}$. One can define a $W:=S_n$ action on the quasitrivial semigroup structures on $X_n$ as follows. For $\sigma\in W, F:X_n\times X_n\rightarrow X_n$
associative and quasitrivial, define $\sigma(F)(a,b)=\sigma(F(\sigma^{-1}(a),\sigma^{-1}(b))).$ We refer the readers to  \cite{quusicsemi1} and \cite{quusicsemi2} for more information on quasitrivial semigroups.

\subsection{The group $S_n^B$}

Denote by $S_n^B$ the group of permutations $\sigma$ of the set $$\{\pm1,\pm2,\cdots,\pm n\}$$ satisfying $\sigma(-k)=-\sigma(k).$
One can write an element $\sigma\in S_n^B$ in the form $$(-a_n,-a_{n-1},\cdots,-a_1,a_1,a_2,\cdots,a_n)$$ where
$a_i=\sigma(i)$ for $1\leq i\leq n.$ The multiplication of $S_n^B$ is defined as $(\sigma_1\sigma_2)(i)=\sigma_1(\sigma_2(i)).$

\subsection{Root systems of type $A_n, B_n$ and its biclosed sets}
Let $W$ be the symmetry group $S_{n+1}$ on $n+1$ elements or $S_n^B$. For $W=S_{n+1}$, the involutions $(i,i+1), 1\leq i\leq n$ are called the Coxeter generators of $W$. Denote $(i,i+1)$ by $s_i$. %The order of $s_is_{i+1}, 1\leq i\leq n$ in $S_{n+1}$ is $3$. The order of $s_is_j, 1\leq i,j\leq n, |i-j|>1$ in $S_{n+1}$ is $2$.
For $W=S_{n}^B$, the involutions $s_{i+1}:=(i-1,i)(-i,-i+1),2\leq i\leq n$ and $s_1=(-1,1)$ are called the Coxeter generators of $W$.

The irreducible \emph{root system} of type $A_n$ (resp. $B_n$) can be constructed in the following way. Let $V$ be an $n$ dimensional $\mathbb{R}-$vector space with a basis $\{\alpha_i,1\leq i\leq n\}$ whose elements are in bijection with the Coxeter generators $s_i, 1\leq i\leq n$.

For type $A_n$, one equips $V$ with the symmetric bilinear form by requiring
$$(\alpha_i,\alpha_j)=0,\, \text{if $|i-j|\geq 2$},$$
$$(\alpha_i,\alpha_j)=-\frac12,\, \text{if $|i-j|=1$},$$
$$(\alpha_i,\alpha_i)=1$$
and extending $(,)$ bilinearly.

For type $B_n$, one equips $V$ with the symmetric bilinear form by requiring
$$(\alpha_i,\alpha_j)=0,\, \text{if $|i-j|\geq 2$},$$
$$(\alpha_i,\alpha_{i+1})=-\frac12,\, \text{for $2\leq i\leq n-1$},$$
$$(\alpha_1,\alpha_2)=-1,$$
$$(\alpha_i,\alpha_i)=2, \text{for $2\leq i\leq n$},$$
$$(\alpha_1,\alpha_1)=1$$
and extending $(,)$ bilinearly.

For a root system of type $A_n$ (resp. $B_n$), map each Coxeter generator of $W=S_{n+1}$ (resp. $S_n^B$) to a linear map on $V$ defined as:
$$s_i\mapsto \phi(s_i):V\rightarrow V, v\mapsto v-2\frac{(v,\alpha_i)}{(\alpha_i,\alpha_i)}\alpha_i.$$

It can be shown that $\phi$ can be extended to a linear representation of $W$. We write $w(v)$ for $\phi(w)(v).$
Then the root system $\Phi$  is $\{w(\alpha_i):\,w\in W, 1\leq i\leq n\}.$ For the type $A_n$, it can be proved that $$\Phi=\{\pm\sum_{j=i}^{k}\alpha_j:\,1\leq i,k\leq n\}.$$
For the type $B_n$, it can be proved that
$$\Phi=\{\pm\sum_{j=i}^{k}\alpha_j:\,1\leq i,k\leq n\}\cup\{\pm(\sum_{j=1}^{k}2\alpha_j+\sum_{l={k+1}}^p\alpha_l):\,1\leq k<n, p\leq n\}$$
An element in $\Phi$ is called a \emph{root}.
A vector space total order on $V$ is a total order $\leq$ defined on $V$ with the following additional properties:

(1) For $v_1,v_2,v_3\in V$, if $v_1<v_2$ then $v_1+v_3<v_2+v_3;$

(2) For $v_1,v_2\in V$ and $0<k\in \mathbb{R}$ (resp. $0>k\in \mathbb{R}$), $v_1<v_2$ implies $kv_1<kv_2$ (resp. $kv_1>kv_2$).

A \emph{positive system} $\Psi^+$ of $\Phi$ is the subset of $\Phi$ consisting of elements greater than $0$ under a vector space total order on $V$. It follows that $\Phi$ is the disjoint union of $\Psi^+$ and $-\Psi^+$. The subset $-\Psi^+$ is called a \emph{negative system} and is denoted by $\Psi^-$.
 A root in $\Psi^+$ (resp. $\Psi^-$) is called a \emph{positive root} (resp. \emph{negative root}).
 A subset $\Delta$ of $\Psi^+$ is called the \emph{simple system} of $\Psi^+$ if $\Delta$ is a basis for $V$ and every positive root is a positive linear combination of the elements in $\Delta$. An element in $\Delta$ is called a \emph{simple root}.

On can check that for type $A_n$ root system, $\Phi^+:=\{\sum_{j=i}^{k}\alpha_j:\,1\leq i,k\leq n\}$ is a positive system and that for type $B_n$ root system, $$\Phi^+:=\{\sum_{j=i}^{k}\alpha_j:\,1\leq i,k\leq n\}\cup\{\sum_{j=1}^{k}2\alpha_j+\sum_{l={k+1}}^p\alpha_l:\,1\leq k<n, p\leq n\}$$ is a positive system. Its simple system $\Delta$ is the set $\{\alpha_i,1\leq i\leq n\}$.
It can be shown that the set of positive systems is $\{w\Phi^+|,w\in W\}$. The positive system $\Phi^+$ will be called the standard positive system.

A subset $C$ of $\Phi$ is \emph{closed} if for $\alpha,\beta\in C,\alpha+\beta\in \Phi$ implies $\alpha+\beta\in C$. For $D\subset \Phi$, a \emph{biclosed} set $B$ in $D$ is a closed set $B$ whose complement in $D$ is also closed. A positive system $\Psi^+$ can always be written in the form $B\cup -(\Phi^+\backslash B)$ where $B$ is a biclosed set in $\Phi^+.$ Every biclosed set in $\Phi^+$ is of the form $\{\alpha\in \Phi^+|w(\alpha)\in \Phi^-\}$ for some unique $w\in W.$ Thus the biclosed sets in $\Phi^+$ are in bijection with the elements in $W$. In this paper we are mainly concerned with the biclosed sets in the whole root system $\Phi$. We denote the set of biclosed sets in $\Phi$ by $\mathcal{B}(\Phi)$.

It is shown in \cite{biclosedphi}  that the biclosed sets in $\Phi$ are of the form
$$(\Psi^+\backslash(\mathbb{R}-\text{span}(\Delta_1)))\cup((\mathbb{R}-\text{span}(\Delta_2))\cap \Phi)$$
where $\Psi^+$ is a positive system and $\Delta_1,\Delta_2$ are two orthogonal subsets of the simple system of $\Psi^+$  (orthogonality here  means that for any $\alpha\in \Delta_1$ and $\beta\in \Delta_2$ one has $(\alpha,\beta)=0$). Following \cite{DyerReflOrder}, we denote such a set by $\Psi^+_{\Delta_1,\Delta_2}$.
It can be shown that any biclosed set $\Psi^+_{\Delta_1,\Delta_2}$ is equal to $w(\Phi^+_{\Delta_1',\Delta_2'})$ for some $w\in W$ and such a representation is unique.
Let $\Psi^+$ be a positive system and $\Delta$ be its simple system. Assume that $\Delta'\subset \Delta$. Denote by $\Phi_{\Delta'}$ the set $\mathbb{R}-\text{span}(\Delta')\cap \Phi$. This is a root subsystem. A biclosed set of the form $w(\Phi^+_{\emptyset,\Delta_2'})$ is called a \emph{parabolic subset} of $\Phi$. A set of the form $\Phi^+_{\emptyset,\Delta_2'}$ is called a \emph{standard parabolic subset}.  A biclosed set of the form $w(\Phi^+_{\Delta_1',\emptyset})$ is called a \emph{horocyclic subset} of $\Phi$.
The stabilizer of a biclosed set $\Psi^+_{\Delta_1,\Delta_2}=w(\Phi^+_{\Delta_1',\Delta_2'})$ is $wUw^{-1}$ where $U$ is the standard parabolic subgroup generated by $s_{\alpha_i},\alpha_i\in \Delta_1'\cup\Delta_2'$.
A detailed study of the poset of the biclosed sets in $\Phi$ under inclusion can  be found in \cite{DyerReflOrder}. For more information on the root system, see \cite{Hum} and \cite{bjornerbrenti}. A combinatorial description of biclosed sets in root systems of classical types (though not their semigroup structural correspondence) is also mentioned in \cite{combinbiclosed}.

\section{The bijection}\label{mainbijA}

Let $n$ be a positive integer.  Let $(k_1,k_2,\cdots, k_t)$ be a vector consisting of $t$ positive integers.
Consider the set
$$P_{(k_1,k_2,\cdots, k_t)}=$$
$$\{(i_1,i_2,\cdots,i_t):1\leq i_1, i_t+k_t-1\leq n, i_j+k_j+1\leq i_{j+1}, 1\leq j\leq t-1\}.$$

For $P_{(k_1,k_2,\cdots, k_t)}\neq \emptyset$ we define $p:=n+1-\sum_{l=1}^tk_l-t$. One easily sees that $p\geq 0$ if $P_{(k_1,k_2,\cdots, k_t)}\neq \emptyset$.
Now we consider two finite sets: $\{A_1,A_2,\cdots, A_t\}$ and $\{B_1,B_2,\cdots,B_p\}$. Define
$$Q_{t,p}=$$
$$\{\text{total}\,\text{order}\,<\,\text{on}\, \{A_1,A_2,\cdots, A_t,B_1,B_2,\cdots,B_p\}: A_1<A_2<\cdots<A_t,$$ $$B_1<B_2<\cdots<B_p\}.$$

\begin{lemma}\label{bijection1}
There exists a bijection between $P_{(k_1,k_2,\cdots, k_t)}$ and $Q_{t,p}$.
\end{lemma}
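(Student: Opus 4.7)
The plan is to exhibit a direct interleaving bijection by interpreting both sides as non-negative integer compositions of $p$ into $t+1$ parts. A tuple $(i_1,\dots,i_t)\in P_{(k_1,\dots,k_t)}$ is naturally visualized as a placement of $t$ blocks of lengths $k_1,\dots,k_t$ inside $\{1,2,\dots,n\}$, where the $j$-th block occupies the positions $[i_j,i_j+k_j-1]$; the three inequalities defining $P_{(k_1,\dots,k_t)}$ say exactly that these blocks are contained in $[1,n]$ and that successive blocks are separated by at least one unused position. Dually, an order in $Q_{t,p}$ is determined by the $t+1$ counts of how many $B$'s fall before $A_1$, strictly between $A_j$ and $A_{j+1}$, and after $A_t$.

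Concretely, to $(i_1,\dots,i_t)\in P_{(k_1,\dots,k_t)}$ I associate the tuple
\[
g_0 := i_1-1,\qquad g_j := i_{j+1}-i_j-k_j-1\ (1\le j\le t-1),\qquad g_t := n-i_t-k_t+1,
\]
each of which is a non-negative integer by the defining inequalities, and whose sum telescopes to $n-\sum_{l=1}^t k_l - (t-1) = p$. I then define $\Phi(i_1,\dots,i_t)$ to be the unique total order on $\{A_1,\dots,A_t,B_1,\dots,B_p\}$ extending $A_1<\cdots<A_t$ and $B_1<\cdots<B_p$ in which the number of $B$'s placed between $A_j$ and $A_{j+1}$ (with the usual conventions at the two ends) equals $g_j$.

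For the inverse, given $<\,\in Q_{t,p}$, let $a_j$ be the number of $B$'s between $A_j$ and $A_{j+1}$ (with $a_0$, $a_t$ meaning before $A_1$ and after $A_t$); these are non-negative integers summing to $p$. I recover the tuple by setting $i_1 := a_0+1$ and $i_{j+1} := i_j+k_j+a_j+1$. A short induction shows $i_t+k_t-1 = n-a_t \le n$, and the other inequalities are immediate, so the result lies in $P_{(k_1,\dots,k_t)}$; the two constructions are manifestly mutually inverse.

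The argument is essentially bookkeeping, and I do not expect a genuine obstacle: the only step requiring a moment's care is checking that the three inequalities in the definition of $P_{(k_1,\dots,k_t)}$ correspond precisely (and without redundancy) to the non-negativity of $g_0,\dots,g_t$, so that no spurious constraint is imposed or missed. Once that is verified, the equality $g_0+\cdots+g_t=p$ makes both $P_{(k_1,\dots,k_t)}$ and $Q_{t,p}$ biject with the set of non-negative integer compositions of $p$ into $t+1$ parts, yielding the desired bijection.
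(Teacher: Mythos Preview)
Your proposal is correct and is essentially the same bijection as the paper's: the paper writes down the explicit interleaved total order (with $B_1,\dots,B_{i_1-1}$ before $A_1$, then $B_{i_1},\dots,B_{i_2-k_1-2}$ between $A_1$ and $A_2$, etc.) and simply asserts that one can readily check bijectivity, whereas you extract the underlying gap vector $(g_0,\dots,g_t)$ and identify both sides with non-negative compositions of $p$ into $t+1$ parts. The content is identical; your version supplies the verification the paper omits.
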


\begin{proof}
Let $(i_1,i_2,\cdots,i_t)$ be an element in $P_{(k_1,k_2,\cdots, k_t)}$. We define a total order on $\{A_1,A_2,\cdots, A_t, B_1,B_2,\cdots,B_p\}$ as follows.
$$B_1<B_2<\cdots <B_{i_1-1}<A_1<$$
$$B_{i_1}<B_{i_1+1}<\cdots<B_{i_2-k_1-2}<A_2<$$
$$B_{i_2-k_1-1}<B_{i_2-k_1}<\cdots<B_{i_3-k_1-k_2-3}<A_3<$$
$$\cdots$$
$$B_{i_j-(k_1+k_2+\cdots+k_{j-1})-(j-1)}<B_{i_j-(k_1+k_2+\cdots+k_{j-1})-j+2}<$$
$$\cdots<B_{i_{j+1}-(k_1+k_2+\cdots+k_{j})-(j+1)}<A_{j+1}<$$
$$\cdots$$
$$B_{i_{t-1}-(k_1+k_2+\cdots+k_{t-2})-(t-2)}<B_{i_{t-1}-(k_1+k_2+\cdots+k_{t-2})-t+3}<$$
$$\cdots<B_{i_{t}-(k_1+k_2+\cdots+k_{t-1})-t}<A_{t}<$$
$$B_{i_{t}-(k_1+k_2+\cdots+k_{t-1})-t+1}<B_{i_{t}-(k_1+k_2+\cdots+k_{t-1})-t+2}<\cdots$$
$$<B_{i_{t}-(k_1+k_2+\cdots+k_{t-1})-t+1+n-(i_t+k_t)}(=B_p).$$
One can readily check that this map is bijective.
\end{proof}

\begin{example}
Suppose that $n=8$, $t=2$, and $(k_1,k_2)=(2,3)$. Therefore $p=2.$ Table \ref{bjjtb} illustrates the bijection between $P_{(2,3)}$ and $Q_{2,2}$.

\begin{table}
\begin{tabular}{|c|c|}
  \hline
  % after \\: \hline or \cline{col1-col2} \cline{col3-col4} ...
  $P_{(2,3)}$ & $Q_{2,2}$ \\
  $(1,4)$ & $A_1<A_2<B_1<B_2$ \\
  $(1,5)$ & $A_1<B_1<A_2<B_2$ \\
  $(1,6)$ & $A_1<B_1<B_2<A_2$ \\
  $(2,5)$ & $B_1<A_1<A_2<B_2$ \\
  $(2,6)$ & $B_1<A_1<B_2<A_2$ \\
  $(3,6)$ & $B_1<B_2<A_1<A_2$ \\
  \hline
\end{tabular}
\caption{Example of bijection}\label{bjjtb}
\end{table}
\end{example}

\begin{theorem}\label{mainbijection}
There exists a $W-$equivariant bijection between the set of all biclosed sets in $\Phi$ (a finite irreducible root system of type $A_n$) and the set of quasitrivial semigroup structures on $X_{n+1}$.
\end{theorem}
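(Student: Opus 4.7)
The plan is to bypass the structural parametrization $\Psi^+_{\Delta_1,\Delta_2}$ and instead construct the bijection pointwise at the level of individual roots. I first fix the standard embedding of $\Phi$ as $\{e_i - e_j : 1\leq i\neq j\leq n+1\}$ inside the hyperplane $\{\sum x_k = 0\}\subset \mathbb{R}^{n+1}$, with $\alpha_i = e_i - e_{i+1}$ and $W = S_{n+1}$ acting by permuting coordinates: $\sigma\cdot(e_i - e_j) = e_{\sigma(i)} - e_{\sigma(j)}$.

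Given $B\in\mathcal{B}(\Phi)$, I define $F_B\colon X_{n+1}\times X_{n+1}\to X_{n+1}$ by $F_B(i,i)=i$ and, for $i\neq j$,
\[ F_B(i,j)=\begin{cases} i & \text{if }e_i - e_j \in B,\\ j & \text{otherwise}.\end{cases} \]
Conversely, given a quasitrivial semigroup $F$ on $X_{n+1}$, set $B_F := \{e_i - e_j : F(i,j)=i\}$. These two assignments are tautologically inverse, and $F_B$ is visibly quasitrivial, so the content of the theorem reduces to two checks: associativity of $F_B \Leftrightarrow$ biclosedness of $B$, and equivariance.

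The key observation for the first check is that two distinct roots $e_i-e_j$ and $e_j-e_l$ sum to a root precisely when $i\neq l$, and that sum is $e_i - e_l$. If both lie in $B$, closure forces $e_i-e_l \in B$, hence $F_B(F_B(i,j),l) = F_B(i,l) = i = F_B(i,j) = F_B(i,F_B(j,l))$; the symmetric case in which neither root lies in $B$ is handled by coclosure of $B$ in $\Phi$; the two mixed cases and all situations with coincident indices collapse to tautologies. Running the same argument in reverse, associativity of $F$ immediately yields closure and coclosure of $B_F$. For equivariance, unwinding $(\sigma F)(a,b) = \sigma(F(\sigma^{-1}(a),\sigma^{-1}(b)))$ via the substitution $i = \sigma^{-1}(a)$, $j = \sigma^{-1}(b)$ gives
\[ B_{\sigma F} = \{e_{\sigma(i)}-e_{\sigma(j)} : F(i,j)=i\} = \sigma(B_F). \]

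The only delicate point I anticipate is the case analysis in the associativity check when two or three of $a,b,c$ coincide; each such case collapses to a tautology once one expands the convention $F_B(a,a)=a$. Notably, this argument does not require the structural description $\Psi^+_{\Delta_1,\Delta_2}$ nor Lemma \ref{bijection1}; those results will become relevant later, when identifying subclasses of biclosed sets with distinguished semigroup types or when extracting enumerative consequences.
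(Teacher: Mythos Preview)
Your argument is correct and proves the theorem, but both the bijection you construct and the route you take differ from the paper's.

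\textbf{Different map.} The paper's bijection, read off from Lemma~\ref{twoelt}, obeys the rule $e_i-e_j\in C \Leftrightarrow F_C(i,j)=j$, whereas yours obeys $e_i-e_j\in B \Leftrightarrow F_B(i,j)=i$. Thus your $F_B$ equals the paper's $F_{\Phi\setminus B}$; the two bijections differ by the $W$-equivariant involution $B\mapsto\Phi\setminus B$ on $\mathcal{B}(\Phi)$. Since the statement only asserts existence, this is harmless, but be aware of the sign convention when invoking later results (e.g.\ $\emptyset$ corresponds to $\pi_1$ in the paper, to $\pi_2$ for you).

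\textbf{Different method.} The paper builds the bijection structurally: it invokes the classification of biclosed sets as $w(\Phi^+_{\Delta_1,\Delta_2})$ and of quasitrivial semigroups as a total preorder plus a projection choice on each non-singleton class, then uses the combinatorial Lemma~\ref{bijection1} to match the positional data $(i_1,\dots,i_t)$ with the ordered interleaving of blocks. Your approach bypasses all of this: you define the correspondence root-by-root and verify associativity $\Leftrightarrow$ biclosedness by a four-case analysis on the triple $(i,j,l)$. In effect you are proving what the paper records separately (Lemma~\ref{twoelt} and the unnamed theorem following it) \emph{first}, and deducing the main bijection from it rather than the other way around.

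\textbf{Trade-offs.} Your route is shorter and self-contained: it does not need the $\Psi^+_{\Delta_1,\Delta_2}$ classification, the structure theorem for quasitrivial semigroups, or Lemma~\ref{bijection1}. The paper's route makes the total preorder and block decomposition visible from the outset, which is exactly what is needed for Proposition~3.5 (matching commutative/anticommutative/identity/zero/parabolic subclasses) and for reading off enumerative consequences; with your approach those identifications would require recovering the preorder from $F_B$ after the fact. One small remark on your write-up: when you reverse the argument to get closure of $B_F$, note that $\alpha+\beta\in\Phi$ can arise either as $(e_a-e_b)+(e_b-e_d)$ or as $(e_a-e_b)+(e_c-e_a)$; the second pattern is handled by associativity on $(c,a,b)$, and it is worth saying so explicitly rather than leaving it inside ``running the same argument in reverse.''
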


\begin{proof}
Let $\Phi$ be a finite irreducible reduced root system of type $A_n$ and $\Phi^+, \Delta$ be one of its positive system and the corresponding simple system.
Let $\Delta_1, \Delta_2\subset \Delta$ be orthogonal.
Consider the set $$P_{\Delta_1,\Delta_2}=\{(\Delta_1',\Delta_2'): \Delta_1',\Delta_2'\subset \Delta, \Phi_{\Delta_1'}\simeq \Phi_{\Delta_1}, \Phi_{\Delta_2'}\simeq \Phi_{\Delta_2}\}.$$
Then one assumes that
$$\Delta_1\cup \Delta_2$$
$$=\{\alpha_{q}:q\in \bigcup_{j=1}^t\{i_j,i_{j}+1,\cdots,i_{j}+k_j-1\},$$
$$1\leq i_1, i_t+k_t-1\leq n, i_j+k_j+1\leq i_{j+1} \,\,\text{for}\,\, 1\leq j\leq t-1\}.$$
Then by Lemma \ref{bijection1}, there is a one-to-one correspondence between any two of $P_{\Delta_1,\Delta_2}$, $P_{(k_1,k_2,\cdots, k_t)}$  and $Q_{t,p}$.
If $\{\alpha_{q}:q\in \{i_j,i_{j}+1,\cdots,i_{j}+k_j-1\}\}\subset \Delta_1$ we write $\epsilon(j)=1$. Otherwise we write $\epsilon(j)=2.$

Given a biclosed set $\Psi^+_{\Xi_1,\Xi_2}$ in $\Phi$, we define a quasitrivial semigroup structure on $X_{n+1}=\{1,2,\cdots,n+1\}$.

Note that $\Psi^+_{\Xi_1,\Xi_2}=\sigma(\Phi^+_{\Delta_1,\Delta_2})$ for some $\sigma\in W=S_{n+1}$ where $\sigma$ is a permutation on $X_{n+1}$.
We define an equivalence relation $\thicksim$ on $X_{n+1}$. The equivalence classes are
$$A_1=\{\sigma(i_1),\sigma(i_1+1),\cdots,\sigma(i_1+k_1)\},$$
$$A_j=\{\sigma(i_j), \sigma(i_j+1), \cdots, \sigma(i_j+k_j)\}, 2\leq j\leq t,$$
Let $B'=\{1,2,\cdots,n+1\}\backslash \cup_{j=1}^t\sigma^{-1}(A_j)=\{l_1,l_2,\cdots,l_p\}$ with $l_1<l_2<\cdots<l_p$ and set
$$B_{1}=\{\sigma(l_1)\}, B_{2}=\{\sigma(l_2)\},\cdots$$
$$B_{p}=\{\sigma(l_p)\}.$$
Then by the above argument $\Phi_{\Delta_1,\Delta_2}^+$ induces a total order on $$ Q_{t,p}=\{A_1,A_2,\cdots,A_t,B_1,B_2,\cdots,B_p\}$$
 and a total preorder $\precsim$ on $X_{n+1}$.
For $x,y\in X_{n+1}$ and $x\nsim y$, we denote by $\max{\{x,y\}}$ the greater one of $x$ and $y$  under the total preorder $\precsim$.

Then a quasitrivial semigroup structure $F_{\Psi^+_{\Xi_1,\Xi_2}}: X_{n+1}\times X_{n+1}\rightarrow X_{n+1}$ is define as
\begin{equation}
  F(x,y) =
    \begin{cases}
      \max{\{x,y\}} & \text{if $x\nsim y$}\\
      x & \text{if $x\thicksim y$}, x,y\in A_j \,\text{and}\, \epsilon(j)=1\\
      y & \text{if $x\thicksim y$}, x,y\in A_j \,\text{and}\, \epsilon(j)=2
    \end{cases}
\end{equation}

Conversely given a quasitrivial semigroup structure $F: X_{n+1}\times X_{n+1}\rightarrow X_{n+1}$, we construct a biclosed set in $\Phi$.
First $F$ determines a total preorder $\precsim$ on $X_{n+1}$. Then $\precsim$ induces an equivalence relation on $X_{n+1}.$
Let $A_1, A_2, \cdots, A_t$ be the equivalence classes containing more than one element. Let $B_1,B_2,\cdots, B_p$ be the equivalence classes with one element. Further define $k_j=|A_j|-1, 1\leq j\leq t.$
On an equivalence class $A_j$ induced by this total preorder, either $F(x,y)=x$ everywhere or $F(x,y)=y$ everywhere.
Suppose that $\epsilon(j)=1$ if $F(x,y)=x$ for $x,y\in A_j$ and $\epsilon(j)=2$ otherwise. The total preorder $\precsim$ yields a total order $\prec$ on
$\{A_1, A_2, \cdots, A_t, B_1, B_2, \cdots,B_p\}$. By Lemma \ref{bijection1}, $\prec$ gives a vector of positive integer
$$(i_1,i_2,\cdots,i_t):1\leq i_1, i_t+k_t-1\leq n, i_j+k_j+1\leq i_{j+1} \,\,\text{for}\,\, 1\leq j\leq t-1.$$
Define
$$\Delta_1=\{\alpha_{q}:q\in \bigcup_{1\leq j\leq t, \epsilon(j)=1}\{i_j,i_{j}+1,\cdots,i_{j}+k_j-1\}\}.$$
$$\Delta_2=\{\alpha_{q}:q\in \bigcup_{1\leq j\leq t, \epsilon(j)=2}\{i_j,i_{j}+1,\cdots,i_{j}+k_j-1\}\}.$$
Extend the total preorder $\precsim$ to a total order $\prec'$ on  $X_{n+1}$.
Consider the permutation $\sigma$ on $X_{n+1}$ which carries $u,1\leq u\leq n+1$ to the $u$-th element of the chain $(X_{n+1},\prec')$.
Then  $\sigma(\Phi^+_{\Delta_1,\Delta_2})$ is the desired biclosed set. Note that although the total order $\prec'$ is not unique (since restricted to an equivalence class the total order $\prec'$ has multiple choices), the biclosed set we obtain is independent of the choice, since the various permutations induced by $\prec'$ fall in the stabilizer of the resulting biclosed set.

The two processes are clearly inverse to each other by construction.

Now we show that the bijection is $W-$invariant.
Let $B=\Psi^+_{\Delta_1',\Delta_2'}$. Then there exists $v\in W$ such that $B=v(\Phi^+_{\Delta_1,\Delta_2}).$
Since $w(v(\Phi^+_{\Delta_1,\Delta_2}))=wv(\Phi^+_{\Delta_1,\Delta_2})$ , it suffices to show that for biclosed set $B=\Phi^+_{\Delta_1,\Delta_2}$ one has
$F_{wB}=wF_B.$ Using the notation preceding the theorem,  the quasitrivial semigroup structure $F_B$ first determines an equivalence relation $\sim$ on $X_{n+1}$ with the equivalence classes
$$A_1,A_2,\cdots,A_t,B_1,B_2\cdots,B_p$$
where $A_i$'s are equivalence classes with cardinality greater than 1 and $B_i$'s are singleton equivalence classes.
Second, $F_B$ determines a total order on the set $\{A_1,A_2,\cdots, A_t, B_1,B_2,\cdots,B_p\}$. Without loss of generality we can assume that
$$B_1<B_2<\cdots <B_{i_1-1}<A_1<$$
$$B_{i_1}<B_{i_1+1}<\cdots<B_{i_2-k_1-2}<A_2<$$
$$\cdots$$
$$B_{i_{t-1}-(k_1+k_2+\cdots+k_{t-2})-(t-2)}<B_{i_{t-1}-(k_1+k_2+\cdots+k_{t-2})-t+3}<$$
$$\cdots<B_{i_{t}-(k_1+k_2+\cdots+k_{t-1})-t}<A_{t}<$$
$$B_{i_{t}-(k_1+k_2+\cdots+k_{t-1})-t+1}<B_{i_{t}-(k_1+k_2+\cdots+k_{t-1})-t+2}<\cdots<B_p.$$
The biclosed set  $B$ also determines an function $\epsilon$ such that $F|_{A_j}=\pi_1$ if $\epsilon(j)=1$ and $F|_{A_j}=\pi_2$ if $\epsilon(j)=2.$
By definition $(wF_B)(x,y)=w(F_B(w^{-1}(x),w^{-1}(y))$. Therefore the equivalence classes and the total order on them determined by $wF_B$ are
$\{w(A_1),w(A_2),\cdots, w(A_t), w(B_1),w(B_2),\cdots,w(B_p)\}$
and
$$w(B_1)<w(B_2)<\cdots <w(B_{i_1-1})<w(A_1)<$$
$$w(B_{i_1})<w(B_{i_1+1})<\cdots<w(B_{i_2-k_1-2})<w(A_2)<$$
$$\cdots$$
$$w(B_{i_{t-1}-(k_1+k_2+\cdots+k_{t-2})-(t-2)})<w(B_{i_{t-1}-(k_1+k_2+\cdots+k_{t-2})-t+3})<$$
$$\cdots<w(B_{i_{t}-(k_1+k_2+\cdots+k_{t-1})-t})<w(A_{t})<$$
$$w(B_{i_{t}-(k_1+k_2+\cdots+k_{t-1})-t+1})<w(B_{i_{t}-(k_1+k_2+\cdots+k_{t-1})-t+2})<\cdots<w(B_p).$$
We also have that $F|_{w(A_j)}=\pi_1$ if $\epsilon(j)=1$ and $F|_{w(A_j)}=\pi_2$ if $\epsilon(j)=2.$ By the construction described in this proof, this corresponds exactly to $F_{wB}$.
\end{proof}

\begin{example}
In this example  the bijection between the biclosed sets in the root system of $A_2$ and quasitrivial semigroup structures on $\{1,2,3\}$ is presented in detail. (See Table \ref{typeatable})
For an equivalence class $A\subset \{1,2,3\}$, we write $A^i, i=1,2$ if the quasitrivial semigroup structure restricted to $A$ is $\pi_i$.

\begin{table}\tiny
\begin{tabular}{|c|c|}
  \hline
  % after \\: \hline or \cline{col1-col2} \cline{col3-col4} ...
  Biclosed set & Quasitrivial semigroup structure \\
    \hline
  $\Phi^+_{\{\alpha_1,\alpha_2\},\emptyset}=\emptyset$ & $\{1,2,3\}^1$ ($\{1,2,3\}=A_1$) \\
  $\Phi^+_{\{\alpha_1\},\emptyset}=\{\alpha_1+\alpha_2,\alpha_2\}$ & $\{1,2\}^1\prec 3$ ($\{1,2\}=A_1$, $\{3\}=B_1$)\\
  $\Phi^+_{\{\alpha_2\},\emptyset}=\{\alpha_1+\alpha_2,\alpha_1\}$ & $1\prec \{2,3\}^1$ ($\{2,3\}=A_1$, $\{1\}=B_1$)\\
  $(2,3)\Phi^+_{\{\alpha_1\},\emptyset}=\{-\alpha_2,\alpha_1\}$ & $\{1,3\}^1\prec 2$ ($\{(2,3)1,(2,3)2\}=A_1$, $\{(2,3)3\}=B_1$)\\
  $(1,3)\Phi^+_{\{\alpha_1\},\emptyset}=\{-\alpha_1-\alpha_2,-\alpha_1\}$ & $\{2,3\}^1\prec 1$ ($\{(1,3)1,(1,3)2\}=A_1$, $\{(1,3)3\}=B_1$)\\
  $(1,2)\Phi^+_{\{\alpha_2\},\emptyset}=\{\alpha_2,-\alpha_1\}$ & $2\prec \{1,3\}^1$ ($\{(1,2)2,(1,2)3\}=A_1$, $\{(1,2)1\}=B_1$)\\
  $(1,3)\Phi^+_{\{\alpha_2\},\emptyset}=\{-\alpha_1-\alpha_2,-\alpha_2\}$ & $3\prec \{1,2\}^1$ ($\{(1,3)2,(1,3)3\}=A_1$, $\{(1,3)1\}=B_1$)\\
  $\Phi^+_{\emptyset,\emptyset}=\{\alpha_1,\alpha_1+\alpha_2,\alpha_2\}$ & $1\prec 2\prec 3, B_i=\{i\},1\leq i\leq 3$\\
  $(1,2)\Phi^+_{\emptyset,\emptyset}=\{-\alpha_1,\alpha_1+\alpha_2,\alpha_2\}$ & $2\prec 1\prec 3, B_i=\{(1,2)i\},1\leq i\leq 3$\\
  $(2,3)\Phi^+_{\emptyset,\emptyset}=\{-\alpha_2,\alpha_1+\alpha_2,\alpha_1\}$ & $1\prec 3\prec 2, B_i=\{(2,3)i\},1\leq i\leq 3$\\
  $(1,3)\Phi^+_{\emptyset,\emptyset}=\{-\alpha_2,-\alpha_1-\alpha_2,-\alpha_1\}$ & $3\prec 2\prec 1, B_i=\{(1,3)i\},1\leq i\leq 3$\\
$(1,2,3)\Phi^+_{\emptyset,\emptyset}=\{\alpha_2,-\alpha_1-\alpha_2,-\alpha_1\}$ & $2\prec 3\prec 1, B_i=\{(1,2,3)i\},1\leq i\leq 3$\\
$(1,3,2)\Phi^+_{\emptyset,\emptyset}=\{-\alpha_2,-\alpha_1-\alpha_2,\alpha_1\}$ & $3\prec 1\prec 2, B_i=\{(1,3,2)i\},1\leq i\leq 3$\\
$\Phi^+_{\emptyset,\{\alpha_1\}}=\{\alpha_1+\alpha_2,\alpha_2,\pm\alpha_1\}$ & $\{1,2\}^2\prec 3$ ($\{1,2\}=A_1$, $\{3\}=B_1$)\\
$\Phi^+_{\emptyset,\{\alpha_2\}}=\{\alpha_1+\alpha_2,\alpha_1,\pm\alpha_2\}$ & $1\prec\{2,3\}^2$ ($\{2,3\}=A_1$, $\{1\}=B_1$)\\
$(2,3)\Phi^+_{\emptyset,\{\alpha_1\}}=\{-\alpha_2,\alpha_1,\pm(\alpha_1+\alpha_2)\}$ & $\{1,3\}^2\prec 2$ ($\{(2,3)1,(2,3)2\}=A_1$, $\{(2,3)3\}=B_1$)\\
 $(1,3)\Phi^+_{\emptyset,\{\alpha_1\}}=\{-\alpha_1-\alpha_2,-\alpha_1,\pm\alpha_2\}$ & $\{2,3\}^2\prec 1$ ($\{(1,3)1,(1,3)2\}=A_1$, $\{(1,3)3\}=B_1$)\\
$(1,2)\Phi^+_{\emptyset,\{\alpha_2\}}=\{\alpha_2,-\alpha_1,\pm(\alpha_1+\alpha_2)\}$ & $2\prec \{1,3\}^2$ ($\{(1,2)2,(1,2)3\}=A_1$, $\{(1,2)1\}=B_1$)\\
 $(1,3)\Phi^+_{\emptyset,\{\alpha_2\}}=\{-\alpha_1-\alpha_2,-\alpha_2.\pm\alpha_1\}$ & $3\prec \{1,2\}^2$ ($\{(1,3)2,(1,3)3\}=A_1$, $\{(1,3)1\}=B_1$)\\
 $\Phi^+_{\emptyset,\{\alpha_1,\alpha_2\}}=\Phi$ & $\{1,2,3\}^2$ ($\{1,2,3\}=A_1$)\\
\hline
\end{tabular}
\caption{Biclosed sets in a type $A_2$ root system and quasitrivial semigroups structures}\label{typeatable}
\end{table}
\end{example}

\begin{proposition}\label{variouscorr}
\begin{enumerate}
\item The commutative quasitrivial semigroup structures on $X_{n+1}$ are in bijection with the positive systems of $\Phi$.
\item The anticommutative quasitrivial semigroup structures on $X_{n+1}$ are precisely $F_{\emptyset}=F_{\Phi^+_{\Delta,\emptyset}}$ and $F_{\Phi}=F_{\Phi^+_{\emptyset,\Delta}}$.
\item The quasitrivial semigroup structures on $X_{n+1}$ with an identity element (resp. zero element) correspond to the biclosed sets of the form $w\Phi^+_{\Delta_1,\Delta_2}$ where $\alpha_1\not\in \Delta_1\cup \Delta_2$ (resp. $\alpha_n\not\in \Delta_1\cup \Delta_2$).
\item The parabolic (resp. horocyclic) subsets of $\Phi$ correspond to quasitrivial semigroup structures $F$ such that $F|_{\{x,y\}}\neq \pi_1$ (resp. $F|_{\{x,y\}}\neq \pi_2$) for any $x,y\in X_{n+1}$.
\end{enumerate}
\end{proposition}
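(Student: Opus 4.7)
The plan is to verify each of the four assertions by translating the property of the quasitrivial semigroup $F$ into a condition on the corresponding biclosed set $\sigma(\Phi^+_{\Delta_1,\Delta_2})$ under Theorem \ref{mainbijection}. Throughout I will use the dictionary set up in the proof of that theorem: nontrivial $\precsim$-classes $A_j$ correspond bijectively to the connected components (maximal runs of consecutive simple roots) of $\Delta_1\cup\Delta_2$; the label $\epsilon(j)\in\{1,2\}$ records whether the $j$-th component lies in $\Delta_1$ or in $\Delta_2$; and singleton classes $B_i$ correspond to the gaps between consecutive components.

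For (1), I would observe that commutativity precludes any nontrivial class, since on such a class $F$ acts as $\pi_1$ or $\pi_2$, neither of which is symmetric; hence $\Delta_1=\Delta_2=\emptyset$ and the biclosed set is a positive system $w\Phi^+$. For (2), anticommutativity is the opposite extreme: any two elements $x\nsim y$ satisfy $F(x,y)=F(y,x)=\max\{x,y\}$, so there must be a single equivalence class equal to $X_{n+1}$, forcing $\Delta_1\cup\Delta_2=\Delta$. The orthogonality constraint together with the connectedness of the $A_n$ Dynkin diagram rules out any nontrivial bipartition, leaving only $\Delta_1=\Delta,\Delta_2=\emptyset$ or vice versa. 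The two resulting biclosed sets $\emptyset=\Phi^+_{\Delta,\emptyset}$ and $\Phi=\Phi^+_{\emptyset,\Delta}$ are $W$-invariant, so their $W$-orbits contribute only the two advertised semigroups $F_\emptyset$ and $F_\Phi$.

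For (3), I would argue that a two-sided identity $e$ must simultaneously be the $\precsim$-minimum (so that $F(e,x)=\max\{e,x\}=x$ whenever $x\nsim e$) and form its own singleton class (otherwise $F(e,x)=x$ and $F(x,e)=x$ within its class would force $F$ to act both as $\pi_2$ and as $\pi_1$ on that class). Via Lemma \ref{bijection1}, this is exactly the condition that the first block in the $Q_{t,p}$-ordering is a singleton, equivalently $i_1\geq 2$, equivalently $\alpha_1\notin\Delta_1\cup\Delta_2$ in the representation $\sigma(\Phi^+_{\Delta_1,\Delta_2})$. The argument for a zero element is the mirror image, replacing ``minimum'' with ``maximum'' and $\alpha_1$ with $\alpha_n$ so that the last block is forced to be a singleton.

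For (4), parabolicity amounts to $\Delta_1=\emptyset$, i.e.\ every $\epsilon(j)=2$, so $F|_{A_j}=\pi_2$ on each nontrivial class; on same-class pairs the restriction is $\pi_2$, and on cross-class pairs it is the constant map to the maximum, so in either case $F|_{\{x,y\}}\neq\pi_1$. Conversely, if $F|_{\{x,y\}}\neq\pi_1$ for every pair then no nontrivial class can carry $\pi_1$, forcing $\Delta_1=\emptyset$. The horocyclic case follows by swapping $\Delta_1\leftrightarrow\Delta_2$ and $\pi_1\leftrightarrow\pi_2$. I expect the main obstacle to be notational rather than conceptual: one must keep the bookkeeping between the block-and-gap structure on $\Delta$, the $\epsilon$-labelling, and the preorder-plus-projection data of $F$ perfectly consistent, but once the dictionary from Theorem \ref{mainbijection} is fully unpacked each part collapses to a short combinatorial check.
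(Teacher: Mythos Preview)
Your proposal is correct and follows essentially the same route as the paper's proof: each part is reduced, via the dictionary of Theorem~\ref{mainbijection}, to a condition on the total preorder (all classes singletons, a single class, minimal/maximal class a singleton, no class labelled $\pi_1$), and then to the corresponding condition on $\Delta_1,\Delta_2$. Your write-up is in fact more explicit than the paper's---in particular you spell out the orthogonality/connectedness step in (2) and the ``singleton first block $\Leftrightarrow i_1\geq 2\Leftrightarrow \alpha_1\notin\Delta_1\cup\Delta_2$'' translation in (3), which the paper leaves implicit.
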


\begin{proof}
(1) For a commutative quasitrivial semigroup structure $F: X_{n+1}\times X_{n+1}\rightarrow X_{n+1}$, the associated total preorder $\precsim$ is a total order on $X_{n+1}$.
By Theorem \ref{mainbijection}, these correspond to biclosed sets of the form $w\Phi^+_{\emptyset,\emptyset}=w\Phi^+$ for some $w\in W$. These are exactly the positive system of $\Phi$.

(2) For an anticommutative quasitrivial semigroup structure $F: X_{n+1}\times X_{n+1}\rightarrow X_{n+1}$, the associated total preorder $\precsim$ has exactly one equivalence class on $X_{n+1}$.
By Theorem \ref{mainbijection}, these correspond to $\Phi^+_{\Delta,\emptyset}$ and $\Phi^+_{\emptyset,\Delta}$.

(3) The quasitrivial semigroup has an identity element (resp. zero element) if and only if the minimal (resp. maximal) class of the induced total preorder is a singleton set. In view of the construction preceding Theorem \ref{mainbijection} and Lemma \ref{bijection1}, this implies that $i_1>1$ (resp. $i_t<n$). Therefore the Proposition (3) follows.

(4) This is clear from the definition of a parabolic set and the bijection.
\end{proof}

Now we introduce a lemma which gives another easy way to reconstruct the biclosed set based on a given quasitrivial semigroup structure.

For $j\leq k$, let $\alpha_{j,k+1}:=\alpha_j+\alpha_{j+1}+\cdots+\alpha_k$ be a root. (Note that the second index of $\alpha_{j,k+1}$ is set to be $k+1$ on purpose. This root corresponds to the transposition $(j,k+1)\in S_{n+1}$ under the canonical bijection between the positive roots and transpositions.)

\begin{lemma}\label{twoelt}
Let $C$ be a biclosed set in $\Phi$ and $F_C$ be the corresponding quasitrivial associative binary operation.
\begin{enumerate}
\item $\{\alpha_{j,k+1},-\alpha_{j,k+1}\}\cap C=\emptyset$ if and only if $F_{C}(j,k+1)=j, F_{C}(k+1,j)=k+1.$
\item $\alpha_{j,k+1}\in C$ and $-\alpha_{j,k+1}\not\in C$ if and only if  $F_{C}(j,k+1)=F_{C}(k+1,j)=k+1.$
\item $-\alpha_{j,k+1}\in C$ and $\alpha_{j,k+1}\not\in C$ if and only if $F_{C}(j,k+1)=F_{C}(k+1,j)=j.$
\item $\{\alpha_{j,k+1},-\alpha_{j,k+1}\}\subset C$ if and only if $F_{C}(j,k+1)=k+1, F_{C}(k+1,j)=j.$
\end{enumerate}
\end{lemma}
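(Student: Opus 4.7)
The plan is to reduce to the standard case $C = \Phi^+_{\Delta_1, \Delta_2}$ using the $W$-equivariance from Theorem \ref{mainbijection}, and then verify the four scenarios by direct inspection. Writing $C = \sigma(\Phi^+_{\Delta_1, \Delta_2})$ and setting $a = \sigma^{-1}(j)$, $b = \sigma^{-1}(k+1)$, we have $\sigma^{-1}(\alpha_{j, k+1}) = e_a - e_b$, which equals $\alpha_{a,b}$ if $a < b$ and $-\alpha_{b,a}$ if $a > b$, and $F_C(j, k+1) = \sigma F_{\Phi^+_{\Delta_1, \Delta_2}}(a, b)$. Thus the four-case classification for $(C, j, k+1)$ reduces to the analogous classification for $(\Phi^+_{\Delta_1, \Delta_2}, a, b)$.

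For the standard biclosed set, the main structural input I would establish is the following trichotomy for any $p < r$: the chain $\{\alpha_p, \alpha_{p+1}, \ldots, \alpha_{r-1}\}$ is either entirely contained in $\Delta_1$ (iff $p \sim r$ and $\epsilon = 1$ on their common class), entirely contained in $\Delta_2$ (iff $p \sim r$ and $\epsilon = 2$), or not contained in $\Delta_1 \cup \Delta_2$ (iff $p \nsim r$). The first two alternatives follow immediately from the construction of the $A_l$; the third uses that $\Delta_1 \perp \Delta_2$ together with the non-orthogonality of consecutive simple roots in type $A$, which prevents a consecutive chain contained in $\Delta_1 \cup \Delta_2$ from being split between the two orthogonal subsets, forcing it to lie in a single block $\{\alpha_{i_l}, \ldots, \alpha_{i_l + k_l - 1}\}$.

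With this trichotomy, I would split on the sign of $b - a$. When $a < b$, $\alpha_{a,b} \in \Phi^+_{\Delta_1, \Delta_2}$ iff $\{\alpha_a, \ldots, \alpha_{b-1}\} \not\subset \Delta_1$, and $-\alpha_{a,b} \in \Phi^+_{\Delta_1, \Delta_2}$ iff $\{\alpha_a, \ldots, \alpha_{b-1}\} \subset \Delta_2$; combining with the trichotomy and the $\pi_1/\pi_2/\max$ description of $F$ on each equivalence class, scenarios (1), (2) and (4) of the lemma arise and can be matched one by one, while scenario (3) is vacuous (it would force $\{\alpha_a, \ldots, \alpha_{b-1}\} \subset \Delta_1 \cap \Delta_2 = \emptyset$, and correspondingly $F(a,b) = F(b,a) = a$ is impossible when $a$ is the smaller element). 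The sub-case $a > b$ is symmetric: $\sigma^{-1}(\alpha_{j, k+1})$ is now negative, so the roles of positive and negative root membership swap, scenarios (1), (3) and (4) appear and are verified, and scenario (2) becomes vacuous.

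The main obstacle is purely bookkeeping: one must track signs carefully when $\sigma$ reverses the natural order of $j$ and $k+1$, and invoke the orthogonality $\Delta_1 \perp \Delta_2$ systematically to rule out simple roots being simultaneously contained in $\Delta_1$ and $\Delta_2$. The independence of $F_C(j,k+1)$ from the (non-unique) choice of $\sigma$ is already guaranteed by Theorem \ref{mainbijection}, so no extra work is needed on that front.
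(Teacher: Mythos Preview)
Your proposal is correct and follows essentially the same strategy as the paper: reduce via the $W$-equivariance of Theorem~\ref{mainbijection} to the standard biclosed set $\Phi^+_{\Delta_1,\Delta_2}$, and then read off the answer from the block structure of $\Delta_1\cup\Delta_2$. The paper carries this out only for item~(1), working with a general $C=w\Phi^+_{\Delta_1,\Delta_2}$ and arguing each direction separately, whereas you first pull everything back to the standard case and then package the analysis as a clean trichotomy on consecutive chains of simple roots followed by a sign split on $b-a$; this is a tidier organization of the same argument rather than a genuinely different proof.
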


\begin{proof}
We prove (1) and the other items can be verified similarly.
First we note that if we write $\alpha_{k+1,j}=-\alpha_{j,k+1}$, then for $w\in W$, $w\alpha_{j,k+1}=\alpha_{w(j),w(k+1)}$ where on the right-hand side $w$ is regarded as a permutation.

Assume that $F_{C}(j,k+1)=j, F_{C}(k+1,j)=k+1$. Then $j$ and $k+1$ are contained in an equivalence class $A_m$ induced by $F_C$ and $\epsilon(m)=1.$
Suppose that $C=w\Phi^+_{\Delta_1,\Delta_2}$.
Then by the bijection described in the proof of Theorem \ref{mainbijection}, $A_m=\{w(i_m),\cdots,w(i_m+k_m) \}$.
Then the above condition implies that
$$w\Phi_{\{\alpha_{i_m},\alpha_{i_{m}+1},\cdots ,\alpha_{i_m+k_m-1}\}}\cap C=\emptyset,$$
$$w(i_m+x)=j, w(i_m+y)=k+1 \,\,\text{for some}\,\, 0\leq x,y\leq k_m.$$
Again the condition implies that neither $w(\alpha_{i_m+x,i_m+y})=\alpha_{j,k+1}$ nor $w(\alpha_{i_m+y,i_m+x})=\alpha_{k+1,j}$ is contained in $C$.

Conversely, suppose that $\{\alpha_{j,k+1},-\alpha_{j,k+1}\}\cap C=\emptyset$. Also assume that $C=w\Phi^+_{\Delta_1,\Delta_2}$.
Then neither $\alpha_{w^{-1}(j),w^{-1}(k+1)}$ nor $\alpha_{w^{-1}(k+1),w^{-1}(j)}$ is contained in $\Phi^+_{\Delta_1,\Delta_2}(=w^{-1}C)$.
This implies that there exists $$\{\alpha_{i_m},\alpha_{i_{m}+1},\cdots ,\alpha_{i_m+k_m-1}\}\subset \Delta_1$$ and $$w^{-1}(j),w^{-1}(k+1)\in \{i_m,\cdots,i_m+k_m\}.$$
Then there exists an equivalence class
$$\{i_m,\cdots,i_m+k_m\}$$
induced by $F_{w^{-1}C}$ containing both $w^{-1}(j)$ and $w^{-1}(k+1).$ Such an equivalence class is mapped to $1$ by $\epsilon.$
So $F_{w^{-1}C}(w^{-1}(j),w^{-1}(k+1))=w^{-1}(j)$ and $F_{w^{-1}C}(w^{-1}(k+1),w^{-1}(j))=w^{-1}(k+1).$
Then our assertion follows from the fact $F$ is $W-$equivariant and the formula $(wF_B)(x,y)=w(F_B(w^{-1}(x),w^{-1}(y))$.
\end{proof}

The following theorem is the consequence of the main theorem and the above lemma.

Let $F$ be a binary relation: $X_{n+1}\times X_{n+1}\rightarrow X_{n+1}$ such that $F(a,b)\in \{a,b\}$ for all $a,b\in X_{n+1}.$ Then define a set $C\subset \Phi$ in the following way:
\begin{enumerate}
\item $\{\alpha_{j,k+1},-\alpha_{j,k+1}\}\cap C=\emptyset$ if and only if $F(j,k+1)=j, F(k+1,j)=k+1.$
\item $\alpha_{j,k+1}\in C$ and $-\alpha_{j,k+1}\not\in C$ if and only if  $F(j,k+1)=F(k+1,j)=k+1.$
\item $-\alpha_{j,k+1}\in C$ and $\alpha_{j,k+1}\not\in C$ if and only if $F(j,k+1)=F(k+1,j)=j.$
\item $\{\alpha_{j,k+1},-\alpha_{j,k+1}\}\subset C$ if and only if $F(j,k+1)=k+1, F(k+1,j)=j.$
\end{enumerate}

\begin{theorem}
The set $C$ is biclosed (in $\Phi$) if and only if $F$ is associative.
\end{theorem}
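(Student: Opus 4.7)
The plan is to derive this theorem as a short consequence of Theorem \ref{mainbijection} together with Lemma \ref{twoelt}, via a uniqueness argument. The key preliminary observation I would record is that every root of $\Phi$ has the form $\pm\alpha_{j,k+1}$ for a unique pair $1\leq j<k+1\leq n+1$, so conditions (1)--(4) are pairwise exclusive and jointly exhaustive, and they match the four possible ordered pairs $(F(j,k+1),F(k+1,j))\in\{j,k+1\}^2$. This means that the conditions (1)--(4) set up a bijection between subsets $C\subset\Phi$ and maps $F:X_{n+1}\times X_{n+1}\to X_{n+1}$ satisfying $F(a,b)\in\{a,b\}$ (the diagonal value $F(a,a)=a$ being forced).

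For the ``if'' direction, I would assume $F$ is associative. Then $F$ is a quasitrivial semigroup operation, so Theorem \ref{mainbijection} produces a unique biclosed set $C'$ with $F_{C'}=F$. Applying Lemma \ref{twoelt} to $C'$, the pair $(C',F)$ satisfies exactly conditions (1)--(4). By the uniqueness from the preliminary observation, $C'=C$, and therefore $C$ is biclosed. Conversely, if $C$ is biclosed, Theorem \ref{mainbijection} furnishes an associative $F_C$, and Lemma \ref{twoelt} expresses $F_C$ in terms of $C$ by precisely conditions (1)--(4). Uniqueness again forces $F=F_C$, and hence $F$ is associative.

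I do not anticipate any genuine obstacle: the theorem is essentially a reformulation of Lemma \ref{twoelt} under the bijection of Theorem \ref{mainbijection}, the only bookkeeping being the verification that the four cases (1)--(4) are jointly exhaustive and pairwise exclusive, which is immediate from a case analysis on $(F(j,k+1),F(k+1,j))$.
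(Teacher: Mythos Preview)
Your proposal is correct and matches the paper's approach exactly: the paper states the theorem as ``the consequence of the main theorem and the above lemma'' without further proof, and your argument spells out precisely that deduction, using the observation that conditions (1)--(4) determine $C$ from $F$ (and vice versa) uniquely.
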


Finally, we introduce a partial order  on the set of the quasitrivial semigroups structures on $X_{n+1}$.
Denote by $F_{n+1}$ the set $$\{F: X_{n+1}\times X_{n+1}\rightarrow X_{n+1}:\, F\, \text{is associative and quasitrivial}\}.$$
First, we define a partial order on $F_2$. There are 4 elements in $F_2.$
$$F_{(1)}=\pi_1, F_{(2)}(a,b)=\max{\{a,b\}}, F_{(3)}(a,b)=\min{\{a,b\}}, F_{(4)}=\pi_2.$$
(Here the functions $\max$ and $\min$ are defined in terms of the usual order of the natural numbers.)
Define a partial order on $F_2$: $F_{(1)}<F_{(2)}, F_{(1)}<F_{(3)}, F_{(1)}<F_{(4)}, F_{(2)}<F_{(4)}, F_{(3)}<F_{(4)}.$

Now for $F,F'\in F_{n+1}$, we define $F<F'$ if and only if $F|_{\{a,b\}}<F'|_{\{a,b\}}$ for any $a\neq b, a,b\in X_{n+1}$.

\begin{lemma}
Let $C_1,C_2$ be two biclosed sets in $\Phi$. Then $C_1\subset C_2$ if and only if $F_{C_1}\leq F_{C_2}$. Therefore $(F_{n+1},\leq)$ is a well-defined poset isomorphic to $(\mathcal{B}(\Phi),\subset)$.
\end{lemma}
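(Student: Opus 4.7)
The plan is to reduce the statement to a pair-by-pair verification via Lemma \ref{twoelt}. The key observation is that every root of $\Phi$ has the form $\pm\alpha_{j,k+1}$ for some $1\leq j\leq k\leq n$, so containment of biclosed sets can be tested one pair $\{\alpha_{j,k+1},-\alpha_{j,k+1}\}$ at a time. Likewise, by definition of the order on $F_{n+1}$, the inequality $F_{C_1}\leq F_{C_2}$ can be tested one pair $\{j,k+1\}$ at a time. So the whole statement will follow once we match these two tests pair-by-pair.

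First, I would translate the four cases of Lemma \ref{twoelt} into a dictionary between the four subsets $C\cap\{\alpha_{j,k+1},-\alpha_{j,k+1}\}$ and the four elements of $F_2$ realized by $F_C|_{\{j,k+1\}}$. Concretely, case (1) corresponds to $F_C|_{\{j,k+1\}}=\pi_1=F_{(1)}$; case (2) gives $F_C(j,k+1)=F_C(k+1,j)=k+1$, i.e.\ $\max=F_{(2)}$; case (3) gives $\min=F_{(3)}$; and case (4) gives $\pi_2=F_{(4)}$. Then I would check that under this dictionary the subset order on $\{\alpha_{j,k+1},-\alpha_{j,k+1}\}$ matches exactly the explicit partial order defined on $F_2$: $\emptyset\subset\{\alpha_{j,k+1}\}$ corresponds to $F_{(1)}<F_{(2)}$, $\emptyset\subset\{-\alpha_{j,k+1}\}$ to $F_{(1)}<F_{(3)}$, $\{\alpha_{j,k+1}\}\subset\{\pm\alpha_{j,k+1}\}$ to $F_{(2)}<F_{(4)}$, $\{-\alpha_{j,k+1}\}\subset\{\pm\alpha_{j,k+1}\}$ to $F_{(3)}<F_{(4)}$, and the non-comparable pair $\{\alpha_{j,k+1}\},\{-\alpha_{j,k+1}\}$ to the non-comparable pair $F_{(2)},F_{(3)}$.

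From this dictionary, $C_1\subset C_2$ holds iff $C_1\cap\{\pm\alpha_{j,k+1}\}\subset C_2\cap\{\pm\alpha_{j,k+1}\}$ for every pair $(j,k+1)$, which by the dictionary is iff $F_{C_1}|_{\{j,k+1\}}\leq F_{C_2}|_{\{j,k+1\}}$ for every pair, which by the definition of $\leq$ on $F_{n+1}$ is iff $F_{C_1}\leq F_{C_2}$. This yields the equivalence. For the "therefore" clause, Theorem \ref{mainbijection} has already produced a bijection $C\mapsto F_C$ between $\mathcal{B}(\Phi)$ and $F_{n+1}$, and the equivalence just proved says this bijection is an order isomorphism; in particular the relation $\leq$ on $F_{n+1}$ inherits reflexivity, antisymmetry and transitivity from $\subset$ on $\mathcal{B}(\Phi)$, so it is a genuine partial order.

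There is no real obstacle here — the work is entirely in setting up the four-case dictionary correctly and checking that the Hasse diagram of $F_2$ in the statement really matches the Boolean lattice on the two-element set $\{\alpha_{j,k+1},-\alpha_{j,k+1}\}$. The only mild subtlety to flag is that the order on $F_2$ is given as a list of covering relations plus transitive closure, so one should verify that $F_{(2)}$ and $F_{(3)}$ are incomparable, which matches $\{\alpha_{j,k+1}\}\not\subset\{-\alpha_{j,k+1}\}$ and vice versa.
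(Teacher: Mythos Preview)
Your proposal is correct and is exactly the approach the paper takes: the paper's proof is the single sentence ``This follows directly from Lemma \ref{twoelt} and the definition of the partial order $\leq$,'' and what you have written simply unpacks that sentence into the explicit four-case dictionary and the pairwise verification.
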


\begin{proof}
This follows directly from Lemma \ref{twoelt} and the definition of the partial order $\leq$.
\end{proof}

\begin{remark}
In \cite{DyerReflOrder} it is shown that $(\mathcal{B}(\Phi),\subset)$ (and therefore our poset $(F_{n+1},\leq)$) is actually a lattice.
\end{remark}

\section{bijection for biclosed sets in standard parabolic subsets}\label{parabolicsec}

Let $\Phi^+$ be the chosen standard positive system of a finite irreducible root system $\Phi$. Let $\Delta$ be its simple system and let $\Delta'\subset \Delta.$ Then we call the set $\Phi^+\cup \Phi_{\Delta'}$ a standard parabolic subset of $\Phi$. Indeed a standard parabolic subset is a biclosed set in $\Phi$ and can be denoted by $\Phi^+_{\emptyset, \Delta'}.$ It is conjectured in \cite{DyerWeakOrder} that the biclosed sets in a standard parabolic subset form a lattice under the inclusion. Such a partial order is called a parabolic weak order there. Therefore it is desirable to provide  a combinatorial description of the biclosed sets in a standard parabolic subset. In this section we extend the bijection discussed in Section \ref{mainbijA} to establish a bijection of quasitrivial semigroup structures and biclosed sets in a standard parabolic subset.

\begin{lemma}\label{parabolic}
The biclosed sets in $\Phi^+_{\emptyset,\Delta_2}$ are in bijection with the biclosed sets in $\Phi$ whose stabilizer is contained in $W_{\Delta_2}$, the parabolic subgroup of $W$ generated by $s_{\alpha},\alpha\in \Delta_2.$ More specifically given a biclosed set $C$ in $\Phi^+_{\emptyset,\Delta_2}$, one associates with it  a biclosed set (in $\Phi$)
$$(C\cap \Phi_{\Delta_2})\cup(C\cap \Phi^+_{\Delta_2,\emptyset})\cup -(\Phi^+_{\Delta_2,\emptyset}\backslash (\Phi^+_{\Delta_2,\emptyset}\cap C)).$$
Conversely for a biclosed set $F$ in $\Phi$, one associates with it a biclosed set $F\cap \Phi^+_{\emptyset,\Delta_2}$ in  $\Phi^+_{\emptyset,\Delta_2}$.
\end{lemma}

\begin{proof}
Given a biclosed set $C$ in $\Phi^+_{\emptyset,\Delta_2}$. Write $B:=C\cap \Phi_{\Delta_2}$, $D:=C\cap \Phi^+_{\Delta_2,\emptyset}$ and $$E:=(C\cap \Phi_{\Delta_2})\cup(C\cap \Phi^+_{\Delta_2,\emptyset})\cup -(\Phi^+_{\Delta_2,\emptyset}\backslash (\Phi^+_{\Delta_2,\emptyset}\cap C))$$$$=B\cup D\cup -(\Phi^+_{\Delta_2,\emptyset}\backslash D).$$

First we show that $E$ is closed. The subset $B$ is the intersection of a closed set $C$ with a (closed) root subsystem and therefore closed.
The subset  $D$ is also the intersection of two closed sets and therefore closed. The subset $\Phi^+_{\Delta_2,\emptyset}\backslash D=(\Phi^+_{\emptyset,\Delta_2}\backslash C)\cap \Phi^+_{\Delta_2,\emptyset}$ is again the intersection of two closed sets and is thus closed.

Take $\alpha\in B,
\beta\in D$ with $\alpha+\beta\in \Phi$. Since $\alpha,\beta\in C$, $\alpha+\beta\in C$ since $C$ is closed. Also since $\alpha,\beta\in \Phi^+_{\emptyset,\Delta_2}$, their sum is also contained in $\Phi^+_{\emptyset,\Delta_2}$. So $\alpha+\beta\in B\cup D.$

Take $\alpha\in B, \beta\in -(\Phi^+_{\Delta_2,\emptyset}\backslash D)$ with $\alpha+\beta\in \Phi.$ Since $\alpha,\beta\in \Phi^-_{\emptyset,-\Delta_2}$ one has $\alpha+\beta\in \Phi^-_{\emptyset,-\Delta_2}.$ If $\alpha+\beta=\gamma\in \Phi_{\Delta_2}$, since $-\alpha\in \Phi_{\Delta_2}$ one has $\beta\in \Phi_{\Delta_2}$. A contradiction. Therefore $\alpha+\beta\in \Phi^-_{\emptyset,-\Delta_2}\backslash \Phi_{\Delta_2}=\Phi^-_{-\Delta_2,\emptyset}=-\Phi^+_{\Delta_2,\emptyset}.$
Suppose that $\alpha+\beta\in -D.$ Then $-\alpha-\beta\in D\subset C$. But $\alpha\in C$. The closedness of $C$ implies that $-\beta\in C$. A contradiction. So $\alpha+\beta\in -(\Phi^+_{\Delta_2,\emptyset}\backslash D).$

Finally we take $\alpha\in D,\beta\in -(\Phi^+_{\Delta_2,\emptyset}\backslash D)$ with $\alpha+\beta\in \Phi.$
Note that $(C\cap \Phi_{\Delta_2}^+)\cup D(=C\cap \Phi^+)$ is a biclosed set in $\Phi^+$. Hence
$$\Psi_1^+:=(C\cap \Phi_{\Delta_2}^+)\cup D\cup -(\Phi^+_{\Delta_2,\emptyset}\backslash D)\cup (\Phi_{\Delta_2}^-\backslash -C)$$
is another positive system.
So $\alpha,\beta\in \Psi_1^+$.
 On the other hand $\Psi^+:=\Phi_{\Delta_2}^-\cup \Phi^+_{\Delta_2,\emptyset}=v\Phi^+$ where $v$ is the longest element of the standard parabolic subgroup generated by the reflections in $\Phi_{\Delta_2}$ is a positive system contained in $\Phi^+_{\emptyset,\Delta_2}$.
 Therefore $C\cap \Psi^+=(C\cap \Phi_{\Delta_2}^-)\cup D$ is a biclosed set in $\Psi^+.$

 Consequently $\alpha,\beta\in \Psi_2^+:=(C\cap \Phi_{\Delta_2}^-)\cup D\cup\backslash -(\Phi^+_{\Delta_2,\emptyset}\backslash D)\cup (\Phi_{\Delta_2}^+\backslash -C)$ which is  another positive system. We illustrate the various sets in Figure \ref{coloredsets}.
 
 \begin{figure}\label{coloredsets}
\center
\tikzset{every picture/.style={line width=0.75pt}} %set default line width to 0.75pt        

\begin{tikzpicture}[x=0.75pt,y=0.75pt,yscale=-1,xscale=1]
%uncomment if require: \path (0,330); %set diagram left start at 0, and has height of 330

%Shape: Rectangle [id:dp1339872166942664] 
\draw  [fill={rgb, 255:red, 189; green, 16; blue, 224 }  ,fill opacity=1 ] (207.33,63.47) -- (294.33,63.47) -- (294.33,129.47) -- (207.33,129.47) -- cycle ;
%Shape: Rectangle [id:dp3370186362552594] 
\draw  [fill={rgb, 255:red, 245; green, 166; blue, 35 }  ,fill opacity=1 ] (207.33,102.47) -- (245.33,102.47) -- (245.33,129.47) -- (207.33,129.47) -- cycle ;
%Shape: Rectangle [id:dp544822282190494] 
\draw  [fill={rgb, 255:red, 65; green, 117; blue, 5 }  ,fill opacity=1 ] (294.33,63.47) -- (381.33,63.47) -- (381.33,129.47) -- (294.33,129.47) -- cycle ;
%Shape: Rectangle [id:dp9474815256481648] 
\draw  [fill={rgb, 255:red, 40; green, 35; blue, 245 }  ,fill opacity=1 ] (343.33,102.47) -- (381.33,102.47) -- (381.33,129.47) -- (343.33,129.47) -- cycle ;
%Shape: Rectangle [id:dp45000795804814464] 
\draw  [fill={rgb, 255:red, 219; green, 224; blue, 16 }  ,fill opacity=1 ] (294.33,129.47) -- (381.33,129.47) -- (381.33,195.47) -- (294.33,195.47) -- cycle ;
%Shape: Rectangle [id:dp5578811881452037] 
\draw  [fill={rgb, 255:red, 35; green, 228; blue, 245 }  ,fill opacity=1 ] (343.33,129.47) -- (381.33,129.47) -- (381.33,156.47) -- (343.33,156.47) -- cycle ;
%Shape: Rectangle [id:dp4017405456311953] 
\draw  [fill={rgb, 255:red, 255; green, 255; blue, 255 }  ,fill opacity=1 ] (207.33,129.47) -- (294.33,129.47) -- (294.33,195.47) -- (207.33,195.47) -- cycle ;
%Shape: Rectangle [id:dp15912384913635935] 
\draw  [fill={rgb, 255:red, 155; green, 155; blue, 155 }  ,fill opacity=1 ] (207.33,129.47) -- (245.33,129.47) -- (245.33,156.47) -- (207.33,156.47) -- cycle ;
%Shape: Rectangle [id:dp19825470157656166] 
\draw  [fill={rgb, 255:red, 189; green, 16; blue, 224 }  ,fill opacity=1 ] (416.33,60.47) -- (503.33,60.47) -- (503.33,126.47) -- (416.33,126.47) -- cycle ;
%Shape: Rectangle [id:dp8951254321148809] 
\draw  [fill={rgb, 255:red, 245; green, 166; blue, 35 }  ,fill opacity=1 ] (416.33,99.47) -- (454.33,99.47) -- (454.33,126.47) -- (416.33,126.47) -- cycle ;
%Shape: Rectangle [id:dp5421222882265242] 
\draw  [fill={rgb, 255:red, 224; green, 40; blue, 16 }  ,fill opacity=1 ] (503.33,60.47) -- (590.33,60.47) -- (590.33,126.47) -- (503.33,126.47) -- cycle ;
%Shape: Rectangle [id:dp07788485107814691] 
\draw  [fill={rgb, 255:red, 35; green, 245; blue, 72 }  ,fill opacity=1 ] (552.33,99.47) -- (590.33,99.47) -- (590.33,126.47) -- (552.33,126.47) -- cycle ;
%Shape: Rectangle [id:dp04218632911978637] 
\draw  [fill={rgb, 255:red, 139; green, 87; blue, 42 }  ,fill opacity=1 ] (503.33,126.47) -- (590.33,126.47) -- (590.33,192.47) -- (503.33,192.47) -- cycle ;
%Shape: Rectangle [id:dp12622084907517594] 
\draw  [fill={rgb, 255:red, 0; green, 0; blue, 0 }  ,fill opacity=1 ] (552.33,126.47) -- (590.33,126.47) -- (590.33,153.47) -- (552.33,153.47) -- cycle ;
%Shape: Rectangle [id:dp04739655868343351] 
\draw  [fill={rgb, 255:red, 255; green, 255; blue, 255 }  ,fill opacity=1 ] (416.33,126.47) -- (503.33,126.47) -- (503.33,192.47) -- (416.33,192.47) -- cycle ;
%Shape: Rectangle [id:dp7957198507036891] 
\draw  [fill={rgb, 255:red, 155; green, 155; blue, 155 }  ,fill opacity=1 ] (416.33,126.47) -- (454.33,126.47) -- (454.33,153.47) -- (416.33,153.47) -- cycle ;

% Text Node
\draw (221,106) node [anchor=north west][inner sep=0.75pt]   [align=left] {$\displaystyle D$};
% Text Node
\draw (345.33,105.47) node [anchor=north west][inner sep=0.75pt]   [align=left] {$\displaystyle -B^{-}$};
% Text Node
\draw (357,134) node [anchor=north west][inner sep=0.75pt]   [align=left] {$\displaystyle B^{-}$};
% Text Node
\draw (215,136) node [anchor=north west][inner sep=0.75pt]   [align=left] {$\displaystyle -D$};
% Text Node
\draw (430,103) node [anchor=north west][inner sep=0.75pt]   [align=left] {$\displaystyle D$};
% Text Node
\draw (566,106) node [anchor=north west][inner sep=0.75pt]   [align=left] {$\displaystyle B^{+}$};
% Text Node
\draw (554.33,129.47) node [anchor=north west][inner sep=0.75pt]   [align=left] {$\displaystyle \textcolor[rgb]{1,1,1}{-B^{+}}$};
% Text Node
\draw (424,133) node [anchor=north west][inner sep=0.75pt]   [align=left] {$\displaystyle -D$};

\end{tikzpicture}
\caption{\label{coloredsets} Colored area representation of the sets involved.}
\end{figure}

In Figure \ref{coloredsets}, the orange area represents $D$, the purple area represents $\Phi^+_{\Delta_2,\emptyset}\backslash D,$ the grey area represents $-D$, the white area represents $-(\Phi^+_{\Delta_2,\emptyset}\backslash D)$, the light blue area represents $B^-:=C\cap \Phi^-_{\Delta_2}$, the dark blue area represents $-B^-=\Phi^+_{\Delta_2}\cap -C$, the yellow area represents $\Phi_{\Delta_2}^-\backslash C,$ the dark green area represents $\Phi^+_{\Delta_2}\backslash -C,$ the light green area represents $B^+:=\Phi_{\Delta_2}^+\cap C$, the black area represents $-B^+=\Phi^-_{\Delta_2}\cap -C,$
the red area represents $\Phi^+_{\Delta_2}\backslash C$ and the brown area represents $\Phi^-_{\Delta_2}\backslash -C.$

Therefore the set $E$ corresponds to the union of the orange area, the light green area, the light blue area and white area.
The positive system $\Psi^+_1$ corresponds to the union of the orange area, white area, light green area and brown area.
The positive system $\Psi^+_2$ corresponds to the union of the orange area, the white area, the light blue area and the dark green area.

 So $\alpha_1+\alpha_2\in \Psi_1^+\cap \Psi_2^+.$ From this one sees that $\alpha+\beta$ has to be contained in one of the following three sets

(1)  $-(\Phi^+_{\Delta_2,\emptyset}\backslash D)\cup D$ (union of orange and white areas);

(2) $(C\cap \Phi_{\Delta_2}^+)\cap (\Phi_{\Delta_2}^+\backslash -C),$ (contained in the light green area);

(3) $(C\cap \Phi_{\Delta_2}^-)\cap (\Phi_{\Delta_2}^-\backslash -C),$ (contained in the light blue area).

Therefore $\alpha+\beta\in B\cup D\cup -(\Phi^+_{\Delta_2,\emptyset}\backslash D).$

Now we show that $\Phi\backslash E$ is also closed. This follows from the fact that 
$$\Phi\backslash E=((\Phi^+_{\emptyset,\Delta_2}\backslash C)\cap \Phi_{\Delta_2})\cup((\Phi^+_{\emptyset,\Delta_2}\backslash C)\cap \Phi^+_{\Delta_2,\emptyset})$$
$$\cup -(C\cap \Phi^+_{\Delta_2,\emptyset}),$$
the fact that  $\Phi^+_{\emptyset,\Delta_2}\backslash C$ is biclosed in $\Phi^+_{\emptyset,\Delta_2}$ and the above argument showing that $E$ is closed.

The stabilizer of $E$ lies in $\{s_{\alpha}|\,\alpha\in E\cap -E\}\subset W_{\Delta_2}.$

Finally, the associations $C\mapsto E$ and $E\mapsto E\cap \Phi^+_{\emptyset,\Delta_2}$ are clearly inverse to each other.
\end{proof}

\begin{remark}
\begin{enumerate}
\item The above lemma and its proof both work for other finite crystallographic root systems other than those of type $A_n$.
\item This lemma can be viewed as a generalization of the well-known fact that the biclosed sets in a given positive system $\Psi^+$ are in bijection with the positive systems (i.e. biclosed set in $\Phi$ with trivial stabilizer) via the map $B\mapsto B\cup -(\Psi^+\backslash B)$.
\end{enumerate}
\end{remark}

Now we assume that $\Delta_2=\bigcup_{i=1}^m\Delta_{2,i}$ where $\Delta_{2,i}=\{\alpha_{i_1},\alpha_{i_1+1},\cdots,\alpha_{i_1+t_i}\}$ and $|i_1+t_i-i'_1|\geq 2$ for $i\neq i'$ (i.e. the root subsystems $\Phi_{\Delta_{2,i}}$ are pairwise disjoint and their union is $\Phi_{\Delta_2}$). The following theorem is the direct consequence of Lemma \ref{parabolic}, Lemma \ref{twoelt} and Theorem \ref{mainbijection}. Again for $j\leq k$, let $\alpha_{j,k+1}:=\alpha_j+\alpha_{j+1}+\cdots+\alpha_k$ be a root.

\begin{theorem}\label{bjpara}
\begin{enumerate}
\item The biclosed sets in $\Phi^+_{\emptyset,\Delta_2}$ are in bijection with quasitrivial semigroup structures on $X_{n+1}$ with the property that
each equivalence class induced by the quasitrivial semigroup structure is contained in the set $\{i_1,i_1+1,\cdots, i_1+t_i+1\}$ for some $i, 1\leq i\leq m$.
\item Let $(X_{n+1},F)$ be a quasitrivial semigroup with the property described in (1). Then the corresponding biclosed set $C$ in $\Phi^+_{\emptyset,\Delta_2}$ can be constructed  as follows:
    \begin{enumerate}
\item $\{\alpha_{j,k+1},-\alpha_{j,k+1}\}\cap C=\emptyset$ if  $F(j,k+1)=j, F(k+1,j)=k+1$.
\item $\alpha_{j,k+1}\in C$ and $-\alpha_{j,k+1}\not\in C$ if   $F(j,k+1)=F(k+1,j)=k+1$.
\item $-\alpha_{j,k+1}\in C$ and $\alpha_{j,k+1}\not\in C$ if $F(j,k+1)=F(k+1,j)=j$ and $j,k+1\in \{i_1,i_1+1,\cdots, i_1+t_i+1\}$ for some $i, 1\leq i\leq m$.
\item $\{\alpha_{j,k+1},-\alpha_{j,k+1}\}\subset C$ if  $F(j,k+1)=k+1, F(k+1,j)=j.$
\item $\{\alpha_{j,k+1},-\alpha_{j,k+1}\}\cap C=\emptyset$ if $F(j,k+1)=F(k+1,j)=j$ and there does not exist $i, 1\leq i\leq m$ such that $j,k+1\in \{i_1,i_1+1,\cdots, i_1+t_i+1\}$.
\end{enumerate}
\end{enumerate}
\end{theorem}

\begin{example}
Let $\Phi$ be of type $A_2$. In Table \ref{parabolictable} we list all biclosed sets in $\Phi^+_{\emptyset,\{\alpha_2\}}=\{\alpha_1,\alpha_1+\alpha_2,\alpha_2,-\alpha_2\}$, their corresponding biclosed sets in $\Phi$ and corresponding quasitrivial semigroup structures.

\begin{table}
\begin{tabular}{|c|c|c|}
  \hline
  % after \\: \hline or \cline{col1-col2} \cline{col3-col4} ...
  Biclosed set in $\Phi^+_{\emptyset,\{\alpha_2\}}$  & Biclosed set in $\Phi$ & semigroup structure\\
  \hline
  $\emptyset$ & $\{-\alpha_1,-\alpha_1-\alpha_2\}$ & $\{2,3\}^1\prec 1$ \\
  $\{\alpha_2\}$ & $\{\alpha_2,-\alpha_1,-\alpha_1-\alpha_2\}$ & $2\prec 3\prec 1$ \\
  $\{-\alpha_2\}$ & $\{-\alpha_2,-\alpha_1,-\alpha_1-\alpha_2\}$ & $3\prec 2\prec 1$ \\
  $\{\alpha_2,-\alpha_2\}$ & $\{\pm\alpha_2,-\alpha_1,-\alpha_1-\alpha_2\}$ & $\{2,3\}^2\prec 1$ \\
  $\{\alpha_2,\alpha_1+\alpha_2\}$ & $\{\alpha_2,\alpha_1+\alpha_2,-\alpha_1\}$ & $2\prec 1\prec 3$ \\
  $\{-\alpha_2,\alpha_1\}$ & $\{-\alpha_2,\alpha_1,-\alpha_1-\alpha_2\}$ & $3\prec 1\prec 2$ \\
  $\{\alpha_1+\alpha_2,\alpha_1\}$ & $\{\alpha_1+\alpha_2,\alpha_1\}$ & $1\prec \{2,3\}^1$ \\
 $\{\alpha_1,\alpha_2,\alpha_1+\alpha_2\}$ & $\{\alpha_1,\alpha_2,\alpha_1+\alpha_2\}$ & $1\prec 2\prec 3$ \\
$\{-\alpha_2,\alpha_1,\alpha_1+\alpha_2\}$ & $\{-\alpha_2,\alpha_1,\alpha_1+\alpha_2\}$ & $1\prec 3\prec 2$ \\
$\{\pm\alpha_2,\alpha_1,\alpha_1+\alpha_2\}$ & $\{\pm\alpha_2,\alpha_1,\alpha_1+\alpha_2\}$ & $1\prec \{2,3\}^2$ \\
\hline
\end{tabular}
\caption{Biclosed sets in $\Phi^+_{\emptyset,\{\alpha_2\}}$    and corresponding quasitrivial semigroup structures}\label{parabolictable}
\end{table}
\end{example} 

Combining Lemma \ref{parabolic} and theorem \ref{bjpara}, we can immediately arrive at the following enumeration result.
Let $\{A_1, A_2, \dots A_t\}$ and $\{A_1', A_2', \cdots, A_p'\}$ be two ordered partition of the set $\{1,2,\dots, n\}$.
We write $\{A_1', A_2', \cdots, A_p'\}\unlhd\{A_1, A_2, \dots A_t\}$ if each $A_i$ is a union of $A_{j}', A_{j+1}', \dots, A_k'$ (i,e, the former is a refinement of the latter).  Let $\Phi^+_{\emptyset, \Delta_2}$ be a parabolic set and let $F$ be the corresponding quasitrivial semigroup structure. Denote the associated total preorder by $\prec_F$. Such a total preorder naturally induces a partition of $\{1,2,\dots, n+1\}$, which we will denote by $P$.
We denote by $b(P)$ the number of subsets in $P$ whose size is greater than 1.

\begin{theorem}\label{countbiclosedset}
Let $\Phi$ be of type $A_n$. The number of biclosed sets in the  standard parabolic set $\Phi^+_{\emptyset, \Delta_2}$ is given by
$$\sum_{P'\unlhd P}(|P'|!)2^{b(P')}.$$
\end{theorem}

\begin{example}
Consider the biclosed sets in rank 1 parabolic set $\Phi^+_{\emptyset,\{\alpha_i\}}$. These are in bijection with the total orders on 
$\{1, 2, \dots, n+1\}$ and total orders on $\{1, 2, \dots, i-1, (i, i+1), i+2, \dots, n+1\}$. In this case $$P=\{\{1\}, \{2\}, \dots, \{i-1\}, \{i, i+1\}, \{i+2\}, \dots, \{n+1\}\},$$
$$P'=\{\{1\}, \{2\}, \dots, \{n+1\}\}.$$
Then $b(P)=1$ and $b(P')=0$. Then by Theorem \ref{countbiclosedset}, the number of biclosed sets is 
$$2(n!)+(n+1)!.$$
This sequence is essentially OEIS sequence A052572.
\end{example}

\section{Associativity index of quasitrivial magmas}\label{indexsec}

Let $C$ be an arbitrary subset of $\Phi$ (of type $A_n$). Then we have a quasitrivial magma structure $F_C$ on the set $\{1,2,\cdots, n+1\}$ as defined in Section \ref{mainbijA}.
The \emph{associativity index} of $F_C$ is defined as the cardinality of the set
$$\{(a,b,c)|\,F_C(F_C(a,b),c)\neq F_C(a,F_C(b,c))\}.$$

For such a (related) notion, see for example \cite{assoindex} (where the associativity index is instead defined as the number of triples that satisfy the associativity law).

Define the \emph{index} of $C$ as the cardinality of the set
$$\{\{\alpha,\beta\},\alpha\neq \beta,\alpha,\beta\in C|\,\alpha+\beta\not\in C\}\cup \{\{\alpha,\beta\},\alpha\neq \beta,\alpha,\beta\not\in C|\,\alpha+\beta\in C\}.$$
One sees readily that the biclosed sets are precisely subsets of $\Phi$ having index 0. Therefore this index can be viewed as a measure of the deviation of $C$ from biclosedness. 

We aim to prove
\begin{theorem}\label{indexthm}
The index of $C$ is equal to the associativity index of $F_C.$
\end{theorem}

This theorem can be seen as a generalization of our  Theorem \ref{mainbijection}.

Let $\alpha,\beta,\gamma$ be three different roots. We say that a subset $C$ of $\Phi$ separates $\{\alpha,\beta\}$ and $\{\gamma\}$ if $\alpha+\beta=\gamma$ and $C\cap\{\alpha,\beta,\gamma\}=\{\alpha,\beta\}$ or $\{\gamma\}$.

\begin{lemma}\label{atwotriple}
Let $\Phi$ be of type $A_2.$
A subset $C$ of $\Phi$ separates $\{\alpha_{i,j},\alpha_{j,k}\}$ and $\{\alpha_{i,k}\}$ if and only if $F_C(F_C(i,j),k)\neq F_C(i,F_C(j,k))$.
\end{lemma}

\begin{proof}
First we consider the case where $i=1,j=2$ and $k=3.$

There are $2^6-20=44$ non-biclosed subsets of $\Phi$ listed below.
$$C_1=\{\alpha_{1,2}\},\,\,\, C_2=\{\alpha_{2,1}\},\,\,\, C_3=\{\alpha_{1,3}\},\,\,\, C_4=\{\alpha_{3,1}\},\,\,\,C_5=\{\alpha_{2,3}\},$$
$$C_6=\{\alpha_{3,2}\},\,\,\,C_7=\{\alpha_{1,2},\alpha_{2,3}\},\,\,\,C_8=\{\alpha_{1,2},\alpha_{2,1}\},\,\,\,C_9=\{\alpha_{1,2},\alpha_{3,1}\},$$
$$C_{10}=\{\alpha_{2,3},\alpha_{3,2}\},\,\,\,C_{11}=\{\alpha_{2,3},\alpha_{3,1}\},\,\,\,C_{12}=\{\alpha_{1,3},\alpha_{3,1}\},\,\,\,C_{13}=\{\alpha_{1,3},\alpha_{2,1}\},$$
$$C_{14}=\{\alpha_{3,2},\alpha_{2,1}\},\,\,\,C_{15}=\{\alpha_{3,2},\alpha_{1,3}\},\,\,\,C_{16}=\{\alpha_{1,2},\alpha_{2,3},\alpha_{3,1}\},$$
$$C_{17}=\{\alpha_{1,2},\alpha_{2,1},\alpha_{2,3}\},\,\,\,C_{18}=\{\alpha_{1,2},\alpha_{2,1},\alpha_{1,3}\},\,\,\,C_{19}=\{\alpha_{1,2},\alpha_{2,1},\alpha_{3,2}\},$$
$$C_{20}=\{\alpha_{1,2},\alpha_{2,1},\alpha_{3,1}\},\,\,\,C_{21}=\{\alpha_{2,3},\alpha_{3,2},\alpha_{1,2}\},\,\,\,C_{22}=\{\alpha_{2,3},\alpha_{3,2},\alpha_{2,1}\}$$$$C_i=\Phi\backslash C_{i-22}, 23\leq i\leq 44.$$
Write $J=\{3,7,12,13,15,16,17,21\}$.
One readily checks that $C_j, j\in J$ and $C_{j+22}, j\in J$ are precisely subsets of $\Phi$ which separates $\{\alpha_{1,2},\alpha_{2,3}\}$ and $\{\alpha_{1,3}\}$.
We calculate that
$$F_{C_j}(F_{C_j}(1,2),3)=3\neq1=F_{C_j}(1,F_{C_j}(2,3)), j\in J,$$
$$F_{C_{j+22}}(F_{C_{j+22}}(1,2),3)=3\neq1=F_{C_{j+22}}(1,F_{C_{j+22}}(2,3)), j\in J.$$
For $j\in \{1,2,\cdots,44\}\backslash (J\cup \{j+22|\,j\in J\})$, one can check that
$$F_{C_j}(F_{C_j}(1,2),3)=F_{C_j}(1,F_{C_j}(2,3)).$$
More precise, for $j\in \{2,4,5,6,10,11,14,22,40\},$
$$F_{C_j}(F_{C_j}(1,2),3)=1=F_{C_j}(1,F_{C_j}(2,3)).$$
For $j\in \{1,8,9,18,19,20,27,32,33,44\},$
$$F_{C_j}(F_{C_j}(1,2),3)=2=F_{C_j}(1,F_{C_j}(2,3)).$$
For $j\in \{23,24,26,28,30,31,36,41,42\},$
$$F_{C_j}(F_{C_j}(1,2),3)=3=F_{C_j}(1,F_{C_j}(2,3)).$$
By Theorem \ref{mainbijection}, for a biclosed set $C$ in $\Phi$, $F_C(F_C(1,2),3)=F_C(1,F_C(2,3))$. Therefore we have proved the lemma in the case where $i=1,j=2$ and $k=3.$
One can also define a $S_3$-action on the set of quasitrivial magma structures by $(\sigma F)(i,j)=\sigma(F(\sigma^{-1}(i),\sigma^{-1}(j))).$
Take a permutation $\sigma$ such that $\sigma(1)=i, \sigma(2)=j$ and $\sigma(3)=k.$ Then
$$(\sigma(F_C))((\sigma(F_C))(i,j),k)=(\sigma(F_C))(i,(\sigma(F_C))(j,k))$$
$$\Leftrightarrow F_C(F_C(\sigma^{-1}(i),\sigma^{-1}(j)),\sigma^{-1}(k))=F_C(\sigma^{-1}(i),F_C(\sigma^{-1}(j),\sigma^{-1}(k))).$$
From the definition of $F_C$, one easily sees that $\sigma F_C=F_{\sigma(C)}$. Combining the above observations, we see that
$$F_{\sigma(C)}(F_{\sigma(C)}(i,j),k)=F_{\sigma(C)}(i,F_{\sigma(C)}(j,k))$$
$$\Leftrightarrow (\sigma(F_C))((\sigma(F_C))(i,j),k)=(\sigma(F_C))(i,(\sigma(F_C))(j,k))$$
$$\Leftrightarrow F_C(F_C(\sigma^{-1}(i),\sigma^{-1}(j)),\sigma^{-1}(k))=F_C(\sigma^{-1}(i),F_C(\sigma^{-1}(j),\sigma^{-1}(k)))$$
$\Leftrightarrow C$ separates $\{\alpha_{1,2},\alpha_{2,3}\}$ and $\{\alpha_{1,3}\}$, which is equivalent to the  condition that $\sigma(C)$ separates $\{\alpha_{\sigma(1),\sigma(2)}=\alpha_{i,j}, \alpha_{\sigma(2),\sigma(1)}=\alpha_{j,i}\}$ and $\{\alpha_{\sigma(1),\sigma(3)}=\alpha_{i,j}\}$.
\end{proof}

\begin{lemma}\label{separation}
Let $\Phi$ be of type $A_n.$
A subset $C$ of $\Phi$ separates $\{\alpha_{i,j},\alpha_{j,k}\}$ and $\{\alpha_{i,k}\}$ if and only if $F_C(F_C(i,j),k)\neq F_C(i,F_C(j,k))$.
\end{lemma}

\begin{proof}
The roots $\alpha_{i,j},\alpha_{j,k},\alpha_{i,k},\alpha_{j,i},\alpha_{k,j}$ and $\alpha_{k,i}$ form a root (sub)system $\Phi'$ of type $A_2.$
By definition, a subset $C$ of $\Phi$ separates $\{\alpha_{i,j},\alpha_{j,k}\}$ and $\{\alpha_{i,k}\}$ if and only if $C\cap\Phi'$ separates $\{\alpha_{i,j},\alpha_{j,k}\}$ and $\{\alpha_{i,k}\}$. By Lemma \ref{atwotriple}, the latter is equivalent to $F_{C\cap \Phi'}(F_{C\cap \Phi'}(i,j),k)\neq F_{C\cap \Phi'}(i,F_{C\cap \Phi'}(j,k))$ where $F_{C\cap \Phi'}$ is the quasitrivial magma structure on $\{i,j,k\}$ induced by $C\cap \Phi'$. Now the lemma follows from the fact that when restricted to $\{i,j,k\}$, $F_C$ coincides with $F_{C\cap \Phi'}$.
\end{proof}

The following lemma is easily verifiable.

\begin{lemma}\label{twoequal}
Let $F$ be a quasitrivial magma structure on $\{1,2,\cdots, n\}$. Then one has
$F(F(i,j),i)=F(i,F(j,i)), F(F(i,i),j)=F(i,F(i,j))$ and $F(F(j,i),i)=F(j,F(i,i))$.
\end{lemma}

Now Theorem \ref{indexthm} is the consequence of Lemma \ref{separation} and Lemma \ref{twoequal}.
\begin{example}
Let $\Phi$ be of type $A_2$. The subset $C=\{\alpha_{1,2},\alpha_{2,3}\}$ of $\Phi$ has index 3 since it separates the following three pairs of sets
$$(1)\,\, \{\alpha_{1,2},\alpha_{2,3}\}\,\,\text{and}\,\,\{\alpha_{1,3}\},$$
$$(2)\,\, \{\alpha_{2,1},\alpha_{1,3}\}\,\,\text{and}\,\,\{\alpha_{2,3}\},$$
$$(3)\,\, \{\alpha_{3,2},\alpha_{1,3}\}\,\,\text{and}\,\,\{\alpha_{1,2}\}.$$
Correspondingly $$\{(a,b,c)|\,F_C(F_C(a,b),c)\neq F_C(a,F_C(b,c))\}=\{(1,3,2),(2,1,3),(1,2,3)\}.$$
\end{example}

\begin{remark}
Since the index of $C\subset \Phi$ is a measure of the deviation of $C$ from the biclosedness. One can ask what are the possible values of the indices of $C$. A particular interest might be those $C$ with maximal index (i.e. the ``least" biclosed set). Enumerating them is an interesting problem.
\end{remark}

\section{quasitrivial semigroup of type $B_n$}\label{bnsec}

Inspired by the bijection between biclosed sets in $\Phi$ of type $A_n$ and the quasitrivial semigroup structures on $\{1,2,\dots, n+1\}$. We define a quasitrivial semigroup of type $B_n$.

A total preorder on the set $\{\pm 1, \pm 2, \cdots, \pm n\}$ is \emph{admissible} if the induced equivalence relation satisfies the following properties:

(1) For an equivalence class $B$, either $B=-B$ or $B\cap -B=\emptyset$;

(2) If $B$ is an equivalence class, then so is $-B$;

(3) Let $A,B$ be two equivalence classes. If $A\prec B$, then $-B\prec -A.$

Condition (3) forces that there can be at most one equivalence class $B$ with the property $B=-B.$

We call an admissible total preorder on $\{\pm 1, \pm 2, \cdots, \pm n\}$ \emph{standard} if $-n\precsim -n-1\precsim \cdots\precsim -1\precsim 1\precsim 2\precsim \cdots \precsim n.$

A quasitrivial semigroup structure on the set $\{\pm 1, \pm 2, \cdots, \pm n\}$ is said to be of $B_n$ if and only if

(1) the associated total preorder is admissible and

(2) for an equivalence class $B$, $F|_{B}=F|_{-B}=\pi_i, i\in \{1,2\}$.

Therefore quasitrivial semigroups of type $B_n$ are in bijection with admissible total preorders on the set $\{\pm 1, \pm 2, \cdots, \pm n\}$
 whose induced equivalence classes are labelled with $i, i\in \{1,2\}$ and the labels of the equivalence classes $B$ and $-B$ are the same.

%Denote by $S_n^B$ the group of permutations $\sigma$ of the set $$\{\pm1,\pm2,\cdots,\pm n\}$$ satisfying $(-k)\sigma=-(k)\sigma.$
%One can write an element $\sigma\in S_n^B$ in the form $$(-a_n,-a_{n-1},\cdots,-a_1,a_1,a_2,\cdots,a_n)$$ where
%$a_i=\sigma(i)$ for $1\leq i\leq n.$

One can see immediately that the group $S_n^B$ acts on the set of admissible total preorders (and quasitrivial semigroup structures of type $B$ ).

Now let the root system $\Phi$ be of type $B_n$. For $a,b\in \{1,2,\dots,n\}, a\leq b$, write 
$$\alpha_{-a,b}=(\alpha_1+\alpha_2+\dots+\alpha_a)+(\alpha_1+\alpha_2+\dots+\alpha_{b}),$$ 
$$=2\alpha_1+2\alpha_2+\dots+2\alpha_a+\alpha_{a+1}+\dots+\alpha_b, a\neq b$$
$$\alpha_{-a,a}=\alpha_1+\alpha_2+\dots+\alpha_a,$$
$$\alpha_{a,b}=\alpha_{a+1}+\alpha_{a+2}+\dots+\alpha_b, a<b.$$
Write $\alpha_{n,m}:=-\alpha_{m,n}, \alpha_{-n,-m}=-\alpha_{n,m}$. One may check that $w\alpha_{n,m}=\alpha_{w(n),w(m)}.$

The main result of this section is the following.

\begin{theorem}
\begin{enumerate}
\item The biclosed sets in a root system $\Phi$ of type $B_n$  are in bijection with the  quasi-trivial semigroup structures of type $B_n$.
\item Let $C$ be a biclosed set in $\Phi$ and $F_C$ be the corresponding quasitrivial associative binary operation. Assume that $t<m.$
\begin{enumerate}
\item $\{\alpha_{t,m},-\alpha_{t,m}\}\cap C=\emptyset$ if and only if $F_{C}(t,m)=t, F_{C}(m,t)=m.$
\item $\alpha_{t,m}\in C$ and $-\alpha_{t,m}\not\in C$ if and only if  $F_{C}(t,m)=F_{C}(m,t)=m.$
\item $-\alpha_{t,m}\in C$ and $\alpha_{t,m}\not\in C$ if and only if $F_{C}(t,m)=F_{C}(m,t)=t.$
\item $\{\alpha_{t,m},-\alpha_{t,m}\}\subset C$ if and only if $F_{C}(t,m)=m, F_{C}(m,t)=t.$
\end{enumerate}
\end{enumerate}
\end{theorem}

\begin{proof}
(1) First we show that
the quasitrivial semigroups of type $B_n$, whose induced total preorder is standard are in bijection with the biclosed sets of the form $\Phi_{\Delta_1,\Delta_2}^+$ where $\Phi^+$ is the standard positive system.

From the symmetry of the allowed total preorder on $\{\pm 1, \pm 2, \cdots, \pm n\}$, one sees that the standard admissible total preorders on $\{\pm 1, \pm 2, \cdots, \pm n\}$ are in bijection with their restrictions to $\{\pm 1, 2,\cdots, n\}$. We will from now on  call these restrictions the admissible total preorders on $\{\pm 1, 2,\cdots, n\}$.
As a result type $B_n$ quasitrivial semigroups whose induced total preorder is standard are in bijection with  admissible total preorders on $\{\pm 1, 2,\cdots, n\}$ with induced equivalence classes labelled by $1$ or $2.$

Write $b_1=-1, b_i=i-1, 2\leq i\leq n+1.$
Note that if $b_i\backsim b_j, i<j$ then $b_i\backsim b_{i+1}\backsim \dots \backsim b_j$. This follows immediately from the fact that the induced total preorder is admissible and the assumption that our quasitrivial semigroup structures have a standard induced total preorder.
Therefore a typical admissible  total preorder on $\{\pm 1, 2,\cdots, n\}$ takes the form:
$$b_1\prec b_2\prec \cdots b_{i_1-1}\prec \{b_{i_1}, b_{i_1+1}, \cdots, b_{i_1+k_1}\}$$
$$\prec b_{i_1+k_1+1}\prec b_{i_1+k_1+2}\prec \cdots b_{i_2-1}\prec \{b_{i_2}, b_{i_2+1}, \cdots, b_{i_2+k_2}\}$$
$$\cdots$$
$$\prec b_{i_{t-1}+k_{t-1}+1}\prec b_{i_{t-1}+k_{t-1}+2}\prec \cdots b_{i_t-1}\prec \{b_{i_t}, b_{i_t+1}, \cdots, b_{i_t+k_t}\}$$
$$\prec b_{i_{t}+k_{t}+1}\prec b_{i_{t}+k_{t}+2}\prec \cdots b_{n+1}.$$
By Lemma \ref{bijection1}, these preorders are in bijective with  vectors  of positive integers
$$(i_1,i_2,\cdots,i_t):1\leq i_1, i_t+k_t-1\leq n, i_j+k_j+1\leq i_{j+1} \,\,\text{for}\,\, 1\leq j\leq t-1.$$
Such vectors are further in bijection with a collection of (pairwise orthogonal) subsets of $\Delta$:
$$\{\Delta_1,\Delta_2,\cdots,\Delta_t: \Delta_j=\{\alpha_{i_j},\alpha_{i_j+1},\cdots,\alpha_{i_j+k_j-1}\}\}.$$
Therefore we can conclude that admissible total preorders on $\{\pm 1, 2,\cdots, n\}$ with induced equivalence classes labelled by $1$ or $2$ are in bijection with
these collections of subsets with each $\Delta_i$ labelled with $1$ or $2$.
Let $\Delta_1'$ ($\Delta_2'$) be the union of $\Delta_i$ which is labelled $1$ (resp. $2$).
Hence admissible total preorders on $\{\pm 1, 2,\cdots, n\}$ with induced equivalence classes labelled by $1$ or $2$ are in bijection with the biclosed sets
of the form $\Phi^+_{\Delta_1',\Delta_2'}$.

The stabilizer of a standard admissible total preorder on $\{\pm 1, \pm 2, \cdots, \pm n\}$ is the standard parabolic subgroup generated by the simple reflections $(i,i+1)(-i,-i-1)$ when $i$ and $i+1$  are equivalent under this preorder and the simple reflection $(-1,1)$ if $-1$ and $1$ are equivalent under this preorder. This stabilizer coincides with the stabilizer of the biclosed set $\Phi^+_{\Delta_1',\Delta_2'}$ corresponding to this total preorder.
Therefore the $W$-equivariant bijection follows.

(2) By a routine check, one can verify that for $1\leq t<m\leq n$ and $-n\leq a,b\leq n$, $$\alpha_{a,b}\in \Phi_{\{\alpha_{t+1},\alpha_{t+2},\dots,\alpha_m\}}$$
if and only if $t\leq a<b\leq m$ or $-m\leq -b<-a\leq t$ and for $1\leq m\leq n$ and $-n\leq a,b\leq n$, $$\alpha_{a,b}\in \Phi_{\{\alpha_{1},\alpha_{2},\dots,\alpha_m\}}$$
if and only if $1\leq |a|, |b| \leq m.$ This proves the case where the associated admissible total preorder is standard. 

Then (2) can be proved by applying the proof of Lemma \ref{twoelt} almost verbatim to this $B_n$ situation thanks to the fact that $w\alpha_{n,m}=\alpha_{w(n),w(m)}$.
\end{proof}

\begin{example}
In this example  the bijection between the biclosed sets in the root system of $B_2$ and quasitrivial semigroup structures on $\{\pm1,\pm2\}$ is presented in detail. (See Table \ref{typebtable}.)
For an equivalence class $A\subset \{\pm1,\pm2\}$, we write $A^i, i=1,2$ if the quasitrivial semigroup structure restricted to $A$ is $\pi_i$. Denote the short simple root by $\alpha_1$ and the long simple root by $\alpha_2$.

\begin{table}\tiny
\begin{tabular}{|c|c|}
  \hline
  % after \\: \hline or \cline{col1-col2} \cline{col3-col4} ...
  Biclosed set & Quasitrivial semigroup structure \\
    \hline
  $\Phi^+_{\{\alpha_1,\alpha_2\},\emptyset}=\emptyset$ & $\{-2,-1,1,2\}^1$ \\
  $\Phi^+_{\{\alpha_1\},\emptyset}=\{\alpha_1+\alpha_2,\alpha_2,2\alpha_1+\alpha_2\}$ & $-2\prec\{-1,1\}^1\prec 2$\\
  $\Phi^+_{\{\alpha_2\},\emptyset}=\{\alpha_1+\alpha_2,\alpha_1,2\alpha_1+\alpha_2\}$ & $\{-2,-1\}^1\prec\{1,2\}^1$\\
  $s_2\Phi^+_{\{\alpha_1\},\emptyset}=\{\alpha_1,-\alpha_2,2\alpha_1+\alpha_2\}$ & $-1\prec\{-2,2\}^1\prec 1$\\
  $s_1s_2\Phi^+_{\{\alpha_1\},\emptyset}=\{-\alpha_1,\alpha_2,-2\alpha_1-\alpha_2\}$ & $1\prec\{-2,2\}^1\prec -1$\\
  $s_2s_1s_2\Phi^+_{\{\alpha_1\},\emptyset}=\{-\alpha_1-\alpha_2,-\alpha_2,-2\alpha_1-\alpha_2\}$ & $2\prec\{-1,1\}^1\prec -2$\\

  $s_1\Phi^+_{\{\alpha_2\},\emptyset}=\{\alpha_2,-\alpha_1,\alpha_1+\alpha_2\}$ & $\{-2,1\}^1\prec\{-1,2\}^1$\\
  $s_2s_1\Phi^+_{\{\alpha_2\},\emptyset}=\{-\alpha_1-\alpha_2,-\alpha_2,\alpha_1\}$ & $\{-1,2\}^1\prec\{-2,1\}^1$\\
  $s_1s_2s_1\Phi^+_{\{\alpha_2\},\emptyset}=\{-2\alpha_1-\alpha_2,-\alpha_1-\alpha_2,-\alpha_1\}$ & $\{1,2\}^1\prec\{-2,-1\}^1$\\

  $\Phi^+_{\emptyset,\emptyset}=\{\alpha_1,\alpha_1+\alpha_2,\alpha_2,2\alpha_1+\alpha_2\}$ & $-2\prec -1\prec 1\prec 2$\\
  $s_1\Phi^+_{\emptyset,\emptyset}=\{-\alpha_1,\alpha_1+\alpha_2,\alpha_2,2\alpha_1+\alpha_2\}$ & $-2\prec 1\prec -1\prec 2$\\
  $s_2s_1\Phi^+_{\emptyset,\emptyset}=\{\alpha_1,-\alpha_1-\alpha_2,-\alpha_2,2\alpha_1+\alpha_2\}$ & $-1\prec 2\prec -2\prec 1$\\
   $s_1s_2s_1\Phi^+_{\emptyset,\emptyset}=\{-\alpha_1,-\alpha_1-\alpha_2,\alpha_2,-2\alpha_1-\alpha_2\}$ & $1\prec 2\prec -2\prec -1$\\
  $s_2s_1s_2s_1\Phi^+_{\emptyset,\emptyset}=\{-\alpha_1,-\alpha_1-\alpha_2,-\alpha_2,-2\alpha_1-\alpha_2\}$ & $2\prec 1\prec -1\prec -2$\\
  $s_2\Phi^+_{\emptyset,\emptyset}=\{\alpha_1,\alpha_1+\alpha_2,-\alpha_2,2\alpha_1+\alpha_2\}$ & $-1\prec -2\prec 2\prec 1$\\
$s_1s_2\Phi^+_{\emptyset,\emptyset}=\{\alpha_2,\alpha_1+\alpha_2,-\alpha_1,-2\alpha_1-\alpha_2\}$ & $1\prec -2\prec 2\prec -1$\\
$s_2s_1s_2\Phi^+_{\emptyset,\emptyset}=\{-\alpha_2,-\alpha_1-\alpha_2,\alpha_1,-2\alpha_1-\alpha_2\}$ & $2\prec -1\prec 1\prec -2$\\

$\Phi^+_{\emptyset,\{\alpha_1\}}=\{\pm\alpha_1,\alpha_1+\alpha_2,\alpha_2,2\alpha_1+\alpha_2\}$ & $-2\prec\{-1,1\}^2\prec 2$\\
  $\Phi^+_{\emptyset,\{\alpha_2\}}=\{\pm\alpha_2,\alpha_1+\alpha_2,\alpha_1,2\alpha_1+\alpha_2\}$ & $\{-2,-1\}^2\prec\{1,2\}^2$\\
  $s_2\Phi^+_{\emptyset,\{\alpha_1\}}=\{\alpha_1,-\alpha_2,2\alpha_1+\alpha_2,\pm(\alpha_1+\alpha_2)\}$ & $-1\prec\{-2,2\}^2\prec 1$\\
  $s_1s_2\Phi^+_{\emptyset,\{\alpha_1\}}=\{-\alpha_1,\alpha_2,-2\alpha_1-\alpha_2,\pm(\alpha_1+\alpha_2)\}$ & $1\prec\{-2,2\}^2\prec -1$\\
  $s_2s_1s_2\Phi^+_{\emptyset,\{\alpha_1\}}=\{-\alpha_1-\alpha_2,-\alpha_2,-2\alpha_1-\alpha_2,\pm\alpha_1\}$ & $2\prec\{-1,1\}^2\prec -2$\\

  $s_1\Phi^+_{\emptyset,\{\alpha_2\}}=\{\pm(2\alpha_1+\alpha_2),\alpha_2,-\alpha_1,\alpha_1+\alpha_2\}$ & $\{-2,1\}^2\prec\{-1,2\}^2$\\
  $s_2s_1\Phi^+_{\emptyset,\{\alpha_2\}}=\{\pm(2\alpha_1+\alpha_2),-\alpha_1-\alpha_2,-\alpha_2,\alpha_1\}$ & $\{-1,2\}^1\prec\{-2,1\}^1$\\
  $s_1s_2s_1\Phi^+_{\emptyset,\{\alpha_2\}}=\{\pm\alpha_2,-2\alpha_1-\alpha_2,-\alpha_1-\alpha_2,-\alpha_1\}$ & $\{1,2\}^2\prec\{-2,-1\}^2$\\
$\Phi^+_{\emptyset,\{\alpha_1,\alpha_2\}}=\emptyset$ & $\{-2,-1,1,2\}^2$ \\

\hline
\end{tabular}
\caption{Biclosed sets in a root system of type $B_2$ and corresponding quasitrivial semigroup structures}\label{typebtable}
\end{table}
\end{example}

\section{Oriented matroid structure from total preorder}\label{omsec}

A (reduced) oriented matroid can be defined as an involuted set together with a closure operator (or equivalently, defined as a signed set with a closure operator).
Here we follow \cite{largeconvex} and define
an \emph{oriented matroid} as a triple system $(E,-,c)$  where $E$ is a set with an involution map $-: E\rightarrow E$ (i.e. $(-)^2$ is the identity map) and a (finitary) closure operator $c: \mathcal{P}(E)\rightarrow \mathcal{P}(E)$  such that

   (1) $-c(X)=c(-X)$  for all $X\subseteq E$,
(2) if $X\subseteq E$ and  $x\in c(X\cup \{-x\})$, then $x\in c(X)$,
(3)
if $X\subseteq E$ and  $x,y\in E$ with  $x\in c(X\cup \{-y\})$ but $x\not\in c(X)$, then $y\in c((X\backslash\{y\})\cup\{-x\})$.
(4) $c(\emptyset)=\emptyset.$
 
 Here the closure operator $c$ is finitary, meaning that if $x\in c(X)$ then there exists a finite $Y\subset X$ such that $x\in c(Y).$ 
 
There are many equivalent ways to define an oriented matroid. One of them is to characterize an oriented matroid by its \emph{covectors}. 
A finite oriented matroid can be defined as a triple $(E,-,\mathcal{L})$ where $E$ is a finite set with an involution map $-: E\rightarrow E$ and $\mathcal{L} \subset \mathcal{P}(E)$ with the following properties:

(C1) $X\in \mathcal{L}$ implies $X\cap -X=\emptyset$;

(C2) $\emptyset \in \mathcal{L}$;

(C3) $X\in \mathcal{L}$ implies $-X\subset \mathcal{L}$;

(C4) $X, Y\in \mathcal{L}$ implies $X\circ Y\in \mathcal{L}$ where $X\circ Y$ is defined as follows:

(a) if $x\in X$, then $x\in X\circ Y$ and $-x\not\in X\circ Y$;

(b) if $x\not\in X, -x\not\in X$, then $(X\circ Y)\cap \{\pm x\}=Y\cap \{\pm x\}$;

(C5) if $X, Y\in \mathcal{L}$, $x\in X$ and $-x\in Y$, there exists $Z\in \mathcal{L}$ such that $\{\pm x\}\cap Z=\emptyset$ and   

(a) for all $y\in X\cap Y$, one has $y\in Z$ and $-y\not\in Z$;

(b) if $\{\pm y\}\cap X=\emptyset, \{\pm y\}\cap Y=\{\epsilon y\}$ ($\epsilon\in \{\pm\}$), $\epsilon y\in Z, -\epsilon y\not\in Z$;

(c) if $\{\pm y\}\cap Y=\emptyset, \{\pm y\}\cap X=\{\epsilon y\}$ ($\epsilon\in \{\pm\}$), $\epsilon y\in Z, -\epsilon y\not\in Z$;

(d) if $\{\pm y\}\cap X=\emptyset, \{\pm y\}\cap Y=\emptyset,$ then $\{\pm y\}\cap Z=\emptyset$.

An element of the collection $\mathcal{L}$ is called a covector of the oriented matroid $(E,-,\mathcal{L})$.
Given a root system $\Phi$ realized in the vector space $V$, it is known that the triple $(E,-,c)$ is an oriented matroid with the closure operator $c$  defined as:
\begin{equation*}
c(X):=\{\sum_{i\in I}k_iv_i\mid v_i\in X, k_i\in \mathbb{R}_{\geq 0}, I\,\text{is a finite set.}\}
\end{equation*}

However the root system can be defined in a purely combinatorial way. A combinatorial root system can be identified as $T\times \{\pm1\}$ where $T$ is the set of the reflections in the associated Coxeter group. Therefore it is hoped that a combinatorial proof of the existence of the oriented matroid structure on the abstract root system can be obtained without relying on the linearly realized root system. The (purely algebraic and combinatorial) construction and theory in \cite{DyerWangGroupoid} guarantees the existence of such an oriented matroid structure. However a simpler purely combinatorial proof (than the theory in \cite{DyerWangGroupoid}) of the existence of the oriented matroid structure of the root system is desirable. Here we use the previous total preorder characterization of (a particular type of) biclosed sets to give a purely combinatorial proof.  

It is known that for a finite irreducible root system $\Phi$, the set of covectors of $(\Phi,-,c)$ is precisely the set of horocyclic subsets. 
 
Now let $\Phi$ be an irreducible root system of type $A_n$.

In view of Lemma \ref{twoelt} and Proposition \ref{variouscorr}, to obtain a purely combinatorial proof of the oriented matroid structure of $(\Phi,-,c)$ for the type $A_n$ root system, we only need to verify the covector axioms phrased in terms of total preorders.

Let $(X_n,\precsim_1),(X_n,\precsim_2)$ be two total preorders on $X_n$.
We define a relation $(X_n,\precsim_3)$ as follows:

If $x\precnsim_1 y$ then $x\precnsim_3 y$. If $x\sim_1 y$ and  $x\precnsim_2 y$ then $x\precnsim_3 y.$
If $x\sim_1 y$ and $x\sim_2 y$ then $x\sim_3 y.$

We will denote $\precsim_3$ by $\precsim_1\circ \precsim_2.$

\begin{lemma}\label{ac4}
$(X_n, \precsim_3)$ is a well defined total preorder.
\end{lemma}

\begin{proof}
Let $x,y\in X_n$, we first show that the equivalence relation $\sim_3$ is well defined.
If $x\sim_3 y$ and $y\sim_3 z$, then by definition $x\sim_1 y, x\sim_2y, y\sim_1 z, y\sim_2 z$.
So $x\sim_1 z$ and $x\sim_2 z$. Therefore $x\sim_3 z.$

Next we show  that if $x\sim_3 y$ and $x\precnsim_3 z$ then $y\precnsim_3 z$. This condition implies two possibilities:

(1) $x\sim_1 y, x\sim_2y$ and $x\precnsim_1 z$. Therefore $y\precnsim_1 z$ and consequently $y\precnsim_3 z$.

(2) $x\sim_1 y, x\sim_2y, x\sim_1 z$ and $x\precnsim_2 z$. Therefore $y\sim_1 z$ and $y\precnsim_2 z$. Hence $y\precnsim_3 z.$

Similarly we can show that if $x\sim_3 y$ and $z\precnsim_3 x$ then $z\precnsim_3 y$.

Finally we show that if $x\precnsim_3 y$ and $y\precnsim_3 z$ then $x\precnsim_3 z.$

Case I. $x\precnsim_1 y$ and $y\precnsim_1 z$. So $x\precnsim_1 z$ and consequently $x\precnsim_3 z$.

Case II. $x\sim_1 y, x\precnsim_2 y$ and $y\precnsim_1 z$. So $x\precnsim_1 z$ and consequently $x\precnsim_3 z$.

Case III. $x\precnsim_1 y, y\sim_1 z$ and $y\precnsim_2 z$. So $x\precnsim_1 z$ and consequently $x\precnsim_3 z$.

Case IV. $x\sim_1 y, x\precnsim_2y, y\sim_1 z$ and $y\precnsim_2 z$. So $x\sim_1 z$ and $x\precnsim_2 z$. Consequently $x\precnsim_3 z.$
\end{proof}

%We will write the above constructed $\precsim_3=\precsim_1\circ\precsim_2.$
Let $\precsim_1$ and $\precsim_2$ be two total preorders on $\{1,2,\dots,n\}$. Assume that $x\precnsim_1 y$ and $y\precnsim_2 x$. Denote by $\precsim_3=\precsim_1\circ\precsim_2$. Let $X$ be the equivalent class containing $x$ and let $Y$ be the equivalent class containing $y$ under $\precsim_3$. Assume that $X\precnsim_3X_1\precnsim_3 X_2\precnsim_3\cdots \precnsim_3X_p\precnsim_3 Y$. Denote $L:=X\cup Y$. 
Now we define a partial order on $L, X_1, X_2, \dots, X_p.$

First, we note that $z_1\sim_3 z_2$ implies that $z_1\sim_1 z_2$ and $z_1\sim_2 z_2.$ Therefore all elements in $X_i$ remain equivalent under $\precsim_1$ and 
$\precsim_2$. Now we define the partial order as follows:

(1) If all elements in $X_i$ are equivalent to $Y$ under $\precsim_2$ or $\precsim_1$, then $X_i<L$.

(2) If all elements in $X_i$ are equivalent to $X$ under $\precsim_2$ or $\precsim_1$, then $L<X_i$.

(3) If $X_i\precnsim_1 Y$ and  $X_i\precnsim_2 Y$, then $X_i<L$

(4) If $X\precnsim_1 X_i$ and  $X\precnsim_2 X_i$, then $L<X_i$

 We claim that it is impossible for both $L<X_i$ and $X_i<L$ to hold simultaneously.
 
 \emph{Proof of the claim}: 
 First we show that the conditions (1) and (2) cannot simultaneously hold.
 
  Since $X\precnsim_1 Y$ and $Y\precnsim_2 X$, it cannot happen that $X\sim_1 X_i\sim_1 Y$ or $X\sim_2 X_i\sim_2 Y$.
Suppose that $X\sim_1 X_i$ and $Y\sim_2 X_i$. Since $X\precnsim_3 X_i$,  $X\precnsim_2 X_i$. On the other hand, $Y\precnsim_2 X$ implies $X_i\precnsim X$.
A contradiction. Similar argument shows that $Y\sim_1 X_i$ and $X\sim_2 X_i$ cannot simultaneously hold.

Second, we show that the conditions (1) and (4) cannot simultaneously hold.
 
Assume to the contrary that they both hold. Suppose that $X_i\sim_1 Y$. Since $X_i\precnsim_3 Y$, $X_i\precnsim_2 Y$. This implies $X\precnsim_2 X_i\precnsim Y$, contradicting $Y\precnsim_2 X.$  Suppose that $X_i\sim_2 Y$. Since $Y\precnsim_2 X$, $X_i\precnsim_2 X$, contradicting $X\precnsim_2 X_i.$

Similarly, we can prove that the conditions (2) and (3), conditions (3) and (4) cannot simultaneously hold.  

(5) If $X_i\precnsim_1 X_j$ and $X_i\precnsim_2 X_j$, then $X_i<X_j$.

(6) If $X_i\precnsim_1 X_j$ and $X_i\sim_2 X_j$, then $X_i<X_j$.

(7) If $X_i\sim_1 X_j$ and $X_i\precnsim_2 X_j$, then $X_i<X_j$.

Then one takes the transitive closure.

\begin{lemma}
The above defined partial order $<$ is well-defined.
\end{lemma}

\begin{proof}
We verify that the relations defined above do not introduce circuits (this implies that their transitive closure is a well-defined partial order).

CASE I. Suppose that $X_{i_1}<X_{i_2}<\dots<X_{i_r}<L$. Then (1) $X_{i_1}\precsim_2 X_{i_r}$ and $X_{i_1}\precsim_1 X_{i_r}$.
(2) $X_{i_r}\sim_1 Y$ or $X_{i_r}\sim_2 Y$ or $X_{i_r}\precnsim_j Y, j=1,2$. Note that $X_{i_r}\sim_1 Y$ implies $X_{i_r}\precnsim_2 Y$ and that $X_{i_r}\sim_2 Y$ implies $X_{i_r}\precnsim_2 Y$. Therefore $X_{i_1}\precsim_1 Y$ and $X_{i_1}\precsim_2 Y.$
So   $X_{i_1}<L$. 

CASE II. Suppose that $L<X_{j_1}<X_{j_2}<\dots<X_{j_s}$. This case is dual to CASE I and can be proved similarly.

CASE III. Suppose that $X_{i_1}<X_{i_2}<\dots<X_{i_r}<L<X_{j_1}<X_{j_2}<\dots<X_{j_s}$. 
The previous arguments show that (1)  $X_{i_1}\precsim_2 Y$ and (2) $X\precsim_2 X_{j_s}$. Therefore $X_{i_1}\precsim_2 Y\precnsim_2 X\precsim X_{j_s}.$
Hence either $X_{i_1}<X_{j_s}$ or they have no relation before the transitive closure is introduced.

CASE IV. Suppose that $X_{i_1}<X_{i_2}<\dots<X_{i_r}$. Then by our construction one must have $X_{i_1}<X_{i_r}$.
\end{proof}

Then by the Szpilrajn extension theorem, we can extend the partial order $<$ to a total order on $L, X_1, X_2, \dots, X_p$.
We modify $\precsim_3$ by replacing the interval $[X, Y]$ by the above total order on $L, X_1, X_2, \dots, X_p$.
In this way we obtain a total preorder $\precsim_4$.

Our construction immediately yields

\begin{lemma}\label{ac5}
Under $\precsim_4$ one has

(1) if $z_1\precnsim_1 z_2$ and $z_1\precnsim_2 z_2$, then $z_1\precnsim_4 z_2;$

(2) if $z_1\sim_1 z_2$ and $z_1\precnsim_2 z_2$, then $z_1\precnsim_4 z_2;$

(3) if $z_1\sim_2 z_2$ and $z_1\precnsim_1 z_2$, then $z_1\precnsim_4 z_2.$

(4) if $z_1\sim_1 z_2$ and $z_1\sim_2 z_2$, then $z_1\sim_4 z_2.$

(5) $x\sim_4 y.$
\end{lemma}

\begin{theorem}
Let $\Phi$ be a root system of type $A_n$.  Then the family $L$ consisting of all  horocyclic sets of $\Phi$  satisfies the covector axioms of an oriented matroid.
\end{theorem}

\begin{proof}
By Lemma \ref{twoelt} and Proposition \ref{variouscorr} (4) horocyclic sets correspond to quasitrivial semigroup structures with the property that, when restricted to any two different elements, the multiplication cannot be equal to the projection map $\pi_2.$ Therefore they further correspond bijectively to total preorders on $\{1,2,\cdots,n+1\}$ thanks to the rules in Lemma  \ref{twoelt} and the standard association between total preorders and quasitrivial semigroup structures. Therefore, it is sufficient to show the axioms (C1)-(C5) for the total preorders.

(C1) follows from the rule of association and the fact that we tacitly assume that all equivalent classes are labelled 1 for the total preorder in question.
(C2) follows from the fact that a single equivalent class $\{1,2,\cdots, n+1\}$ is a (trivial) total preorder. 
(C3) follows from the fact that one can reverse a total preorder. (C4) follows from Lemma \ref{ac4} and (C5) follows from Lemma \ref{ac5}.
\end{proof}

\end{document}